\def\frk{\frak}               
\def\Phi{{\frk n}}
\def\Phi{{\frk N}}
\def\MT{{\mathcal T}}
\def\MM{{\mathcal M}}
\def\xb{{\bold x}}
\def\opn#1#2{\def#1{\operatorname{#2}}} 
\opn\chara{char} \opn\length{\ell} \opn\pd{pd} \opn\rk{rk}
\opn\projdim{proj\,dim} \opn\injdim{inj\,dim} \opn\rank{rank}
\opn\depth{depth} \opn\grade{grade} \opn\height{height}
\opn\embdim{emb\,dim} \opn\codim{codim}
\opn\Tr{Tr} \opn\bigrank{big\,rank}
\opn\superheight{superheight}\opn\lcm{lcm}
\opn\trdeg{tr\,deg}
\opn\reg{reg} \opn\lreg{lreg} \opn\ini{in} \opn\lpd{lpd}
\opn\size{size}\opn\bigsize{bigsize}
\opn\cosize{cosize}\opn\bigcosize{bigcosize}
\opn\sdepth{sdepth}\opn\sreg{sreg}
\opn\link{link}\opn\fdepth{fdepth}
\opn\index{index}
\opn\div{div} \opn\Div{Div} \opn\cl{cl} \opn\Cl{Cl}
\opn\Spec{Spec} \opn\Supp{Supp} \opn\supp{supp} \opn\Sing{Sing}
\opn\Ass{Ass} \opn\Min{Min}\opn\Mon{Mon} \opn\dstab{dstab} \opn\astab{astab}
\opn\Syz{Syz}
\opn\Ann{Ann} \opn\Rad{Rad} \opn\Soc{Soc}
\opn\Im{Im} \opn\Ker{Ker} \opn\Coker{Coker} \opn\Am{Am}
\opn\Hom{Hom} \opn\Tor{Tor} \opn\Ext{Ext} \opn\End{End}
\opn\Aut{Aut} \opn\id{id}
\opn\nat{nat}
\opn\pff{pf}
\opn\Pf{Pf} \opn\GL{GL} \opn\SL{SL} \opn\mod{mod} \opn\ord{ord}
\opn\Gin{Gin} \opn\Hilb{Hilb}\opn\sort{sort}
\opn\initial{init}
\opn\ende{end}
\opn\height{height}
\opn\type{type}
\opn\aff{aff} \opn\con{conv} \opn\relint{relint} \opn\st{st}
\opn\lk{lk} \opn\cn{cn} \opn\core{core} \opn\vol{vol}
\opn\link{link} \opn\star{star}\opn\lex{lex}
\opn\gr{gr}
\def\pot#1#2{#1[\kern-0.28ex[#2]\kern-0.28ex]}
\opn\dirlim{\underrightarrow{\lim}}
\opn\inivlim{\underleftarrow{\lim}}
\let\union=\cup
\let\sect=\cap
\def\Implies{\ifmmode\Longrightarrow \else
        \unskip${}\Longrightarrow{}$\ignorespaces\fi}
\def\implies{\ifmmode\Rightarrow \else
        \unskip${}\Rightarrow{}$\ignorespaces\fi}
\def\iff{\ifmmode\Longleftrightarrow \else
        \unskip${}\Longleftrightarrow{}$\ignorespaces\fi}
\newtheorem{Theorem}{Theorem}[section]
 \newtheorem{Lemma}[Theorem]{Lemma}
 \newtheorem{Corollary}[Theorem]{Corollary}
 \newtheorem{Proposition}[Theorem]{Proposition}
 \newtheorem{Examples}[Theorem]{Examples}
 \newtheorem{Conjecture}[Theorem]{Conjecture}
\let\epsilon\varepsilon
\let\kappa=\varkappa
\def\qed{\ifhmode\textqed\fi
      \ifmmode\ifinner\quad\qedsymbol\else\dispqed\fi\fi}
\def\textqed{\unskip\nobreak\penalty50
       \hskip2em\hbox{}\nobreak\hfil\qedsymbol
       \parfillskip=0pt \finalhyphendemerits=0}
\def\dispqed{\rlap{\qquad\qedsymbol}}
\opn\dis{dis}
\def\pnt{{\raise0.5mm\hbox{\large\bf.}}}
\opn\Lex{Lex}
\begin{document}

 \title{On the index of powers of edge ideals}

 \author {Mina Bigdeli, J\"urgen Herzog and Rashid Zaare-Nahandi}

\address{Mina Bigdeli, Department  of Mathematics,  Institute for Advanced Studies in Basic Sciences (IASBS),
45195-1159 Zanjan, Iran} \email{mina.bigdeli@yahoo.com}

\address{J\"urgen Herzog, Fachbereich Mathematik, Universit\"at Duisburg-Essen, Campus Essen, 45117
Essen, Germany} \email{juergen.herzog@uni-essen.de}

\address{Rashid Zaare-Nahandi, Department of Mathematics,  Institute for Advanced Studies in Basic Sciences (IASBS),
45195-1159 Zanjan, Iran} \email{rashidzn@iasbs.ac.ir}

\thanks{The paper was written while the first author was visiting the Department of Mathematics of University Duisburg-Essen. She wants to express her thanks for its hospitality.}

 \begin{abstract}
The index of a graded ideal measures the number of linear steps in the graded minimal free resolution of the ideal. In this paper we study the index of powers and squarefree powers of edge ideals. Our results indicate that the index as a function of the power of an edge ideal $I$ is strictly increasing if $I$ is linearly presented. Examples show that this needs not to be the case for monomial ideals generated in degree greater than two.
 \end{abstract}

\subjclass[2010]{Primary 13D02, 13C13; Secondary 05E40.}
\keywords{edge ideals, index,  powers of ideals, resolutions,}

 \maketitle

\section*{Introduction}
In recent years the study of algebraic and homological properties of powers of ideals has been one of the main subjects of research in  Commutative Algebra. Generally speaking many of those properties, like for example depth, projective dimension or regularity  stabilize for large powers  (see \cite{B}, \cite{Ca}, \cite{Ch}, \cite{CHT}, \cite{Co}, \cite{HH}, \cite{HW}, \cite{HHZ1}, \cite{HHZ2}), while their  initial behavior is  often quite mysterious, even for monomial ideals. However with many respects monomial ideals generated in degree 2 behave more controllable from the very beginning. So now let $I$ be a monomial ideal generated in degree  $2$. The second author together with Hibi and Zheng showed in \cite{HHZ2} that if  $I$ has a linear resolution, then all of its powers have a linear resolution as well. More recently there have been several interesting  generalizations  of  this result. In case that $I$ is  squarefree, $I$ may be viewed as the edge ideal  of a finite simple graph $G$, and  in this case  Francisco, H$\grave{a}$ and Van Tuyl raised the question whether $I^k$ has a linear resolution for $k\geq 2$, assuming  the complementary graph contains no induced $4$-cycle, equivalently, $G$ is gap free. However, Nevo and Peeva showed by an example  \cite[Counterexample 1.10]{NP} that this is not always the case. On the other hand, Nevo \cite{N} showed that $I^2$ has a linear resolution if $G$ is gap and claw  free, and Banerjee \cite{B} gives a positive answer to the above question  under the  additional assumption that $G$ is gap and cricket free. Here we should note that claw free implies cricket free.

In this paper we attempt to generalize the result of Hibi, Zheng and the second author of this  paper in a different direction. An  ideal $I$ is called $r$ steps linear, if $I$ has a linear resolution up to homological degree $r$. In other words, if $I$ is generated in a single degree, say $d$, and
 $\beta_{i,i+j}(I)=0$ for all pairs $(i,j)$ with  $0\leq i\leq r$ and $j>d$. The number
\[
\index(I)=\sup\{r\: \text{$I$ is $r$ steps linear}\}+1
\]
is called the  index of $I$.  A related invariant, called the $N_{d,r}$--property, was  first considered by Green and Lazarsfeld in \cite{GL1}, \cite{GL2}.  In the paper  \cite{BCR} by Bruns et~al.~the Green-Lazarsfeld index was introduced for quadratically generated ideals as the largest integer $r$ such that the $N_{2,r}$--property holds. We use the same terminology applied to any graded ideal in the polynomial ring and  call it simply the index of the ideal.

The main result of Section 2  (Theorem~\ref{main}) is the following: Let $I$ be a monomial ideal generated in degree $2$. We interpret  $I$ as the edge ideal  of a graph $G$  which may also have loops (corresponding to squares among the monomial generators of $I$).  Then the following conditions are equivalent: (a) $G$ is gap free, i.e. no induced subgraph of $G$ consists of two disjoint edges; (b) $\index(I^k)>1$ for all $k$; (c) $\index(I^k)>1$ for some $k$.

Theorem~\ref{main} is not valid  for monomial ideals generated in  degree $>2$. There is an example by Conca \cite{Co} of a monomial  ideal $I$ generated in degree $3$  with linear resolution, that is, $\index(I)=\infty$,  and  with the property that  $\index(I^2)=1$.

Theorem~\ref{main} implies in particular that for a monomial ideal generated in degree $2$ we have     $\index(I)=1$ if and only if $\index(I^k)=1$ for all $k$ . Again this fails if $I$ is not generated in degree $2$. Indeed, for $n\geq 4$ consider the ideal $I=(x^n,x^{n-1}y,y^{n-1}x,y^n)$. Then $\index(I^k)=1$ for $k=1,\ldots,n-3$ and $\index(I ^k)=\infty$  for $k>n-3$. There are also many such counterexamples of monomial ideals generated in degree $3$.

The ideal $I$ in the example of Nevo and Peeva has index 2, its square has index 7,  while $I^3$ and $I^4$ have a linear resolution. This example and other experimental evidence lead us to make the following
\begin{Conjecture}
Let $I$ be a monomial ideal  generated in degree $2$  with linear presentation.  Then $\index(I^{k+1})>\index(I^k)$ for all
$k$. Here we use the convention that $\infty>\infty$. 
\end{Conjecture}
This conjecture implies that $\index(I^k)>k$ if  $\index(I)>1$. In particular, for a gap free graph  $G$,  this would imply that $I(G)^k$ has a linear resolution for $k>n-2$.

For the proof of our Theorem~\ref{main} we use the  theory of $\lcm$-lattices introduced by Gasharov, Peeva and  Welker \cite{GPW}. As an easy application of their theory the monomial ideals of index $>1$ can be characterized by the fact that certain graphs associated with such ideals  are connected. This criterion is used in the proof of Theorem~\ref{main}.

If the index of a graded ideal is finite, then it is at most its projective dimension. In the case that $\index(I)=\projdim(I)$ we say that $I$ has maximal finite index. In Section
~3 edge ideals of maximal finite index are classified. They turn out to be the edge ideals of the complement of a cycle, see Theorem~\ref{main2}. The essential
tools to prove this result are Hochster's formula to compute the graded Betti numbers of a squarefree monomial ideal as well as the result of  \cite[Theorem 2.1]{EGHP} in which the index of an edge ideal is characterized in terms  of  the underlying graph.  As a consequence of Theorem~\ref{main2} it is shown in Corollary~\ref{maxlinear} that all powers $I^k$ for $k\geq 2$ have a linear resolution for an ideal of maximal finite index $>1$. This supports our conjecture that the index of the powers $I^k$ of an edge ideal $I$ is a strictly increasing function on $k$.

Our final Section~4 is devoted to the study of the index of the squarefree powers of edge ideals. The index of squarefree powers shows a quite different behavior than that of ordinary powers.  Let $I$ be the edge ideal of a finite graph $G$. We denote  the $k$-th squarefree power of $I$ by $I^{[k]}$. It is clear that the unique minimal monomial set of generators of $I^{[k]}$ corresponds to the matchings of $G$ of size $k$. In particular, if $\nu(G)$ denotes the matching number of $G$, that is maximal size of a matching of $G$, then  $\nu(G)$  coincides with the maximal number $k$ such that $I^{[k]}\neq 0$. In Theorem~\ref{lin.quo} we show  that $I^{[\nu(G)]}$ always has linear quotients. In particular $\index(I^{[\nu(G)]})=\infty$ no matter whether or not $\index(I)=1$. A matching with the property that one edge of the matching forms a gap with any other edge of the matching will be called a  restricted matching. We denote by $\nu_0(G)$ the maximal size of a restricted matching of $G$. If there is no restricted matching  we set $\nu_0(G)=1$. There are examples which show that $\nu(G)-\nu_0(G)$ may be arbitrary large. However for trees one can see that $\nu_0(G)\geq \nu(G)-1$. It is shown in Lemma~\ref{levico} that $\index(I^{[k]})=1$ for $k<\nu_0(G)$, and we conjecture that $\index(I^{[k]})>1$ for all $k\geq \nu_0(G)$ and prove this conjecture in Theorem~\ref{cycle} for any cycle.

\label{1}\section{Monomial ideals with index $>1$.}

Let $K$ be a field, $S=K[x_1,\ldots,x_n]$ the polynomial ring over $K$ in $n$ indeterminates, and let $I\subset S$ be a monomial ideal generated in degree $d$.

The ideal is called {\em $r$ steps linear}, if $I$ has a linear resolution up to homological degree $r$, in other words, if
 $\beta_{i,i+j}(I)=0$ for all pairs $(i,j)$ with  $0\leq i\leq r$ and $j>d$. Then the number
\[
\index(I)=\sup\{r\: \text{$I$ is $r$ steps linear}\}+1
\]
is called the {\em  index}  of $I$. In particular, $I$ has a linear resolution if and only if $\index(I)=\infty$.
A monomial ideal $I$ of finite index has $\index(I)\leq \projdim(I)$. We say that  $I$ has {\em maximal finite index} if equality holds.

In this section we derive a criterion for a monomial ideal $I$ to be linearly presented, i.e. $\index(I)>1$. This criterion is actually an immediate consequence of the lcm-lattice formula \cite{GPW} by Gasharov, Peeva and Welker for the multi-graded Betti numbers of a monomial ideal $I$, not necessarily  generated in a single degree.

Using the results of this section, we give in Section~\ref{112} a characterization of the  finite graphs whose all powers of  edge ideals are linearly presented. These graphs turn out to be gap free. We say a graph $G$ is {\em gap free} if for any two disjoint  edges $e,e'\in E(G)$ there exists an edge $f\in E(G)$ such that $e\cap f\neq\emptyset\neq e'\cap f$. In the case that $G$ is simple,  $G$ is gap free if and only if its complement $\bar{G}$ has no 4-cycle.

Let $G(I) = \{u_1,\ldots,u_m\}$ be the unique minimal set of  monomial generators of $I$. We denote by  $L(I)$ the lcm-lattice of $I$, i.e.\ the poset whose elements  are labeled by the least
common multiples of subsets of monomials in $G(I)$ ordered by divisibility. The unique minimal element in $L(I)$ is $1$. For any $u\in L(I)$ we denote by $(1,u)$ the open interval of $L(I)$ which by definition is  the induced subposet of $L(I)$ with elements $v\in L(I)$ with $1<v<u$. Furthermore,  we denote by $\Delta((1,u))$ the order complex of the poset $(1,u)$.

The minimal graded free resolution of $I$ is multi-graded. Identifying a monomial with its multi-degree we denote the multi-graded Betti numbers of $I$ by $\beta_{i,u}(I)$, where $i$ is the homological degree and $u$ is a monomial. By Gasharov, Peeva and Welker one has
\begin{eqnarray}
\label{lcm}
\beta_{i,u}(I)=\dim_K \widetilde{H}_{i-1}(\Delta((1,u)); K) \quad \text{for all $i\geq 0$ and all $u\in L(I)$.}
\end{eqnarray}
Moreover, $\beta_{i,u}(I)=0$ if $u\not \in L(I)$.

\medskip
Now suppose that all generators of $I$ are of degree $d$. We define the graph $G_I$ whose vertex set is $G(I)$ and for which  $\{u,v\}$ is an edge of $G_I$ if and only if $\deg (\lcm(u,v))=d+1$.

For all $u, v\in G(I)$ let  $G^{(u,v)}_I$ be the induced subgraph of $G_I$ with vertex set
\[
V(G^{(u,v)}_I)=\{w\: \text{$w$ divides $\lcm(u, v)$}\}.
\]
\begin{Proposition}
\label{graph}
\label{criterion}
Let $I$ be a monomial ideal generated in degree $d$.  Then $I$ is linearly presented if and only if $G^{(u,v)}_I$ is connected for all $u, v\in G(I)$.
\end{Proposition}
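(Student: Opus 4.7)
The plan is to apply the Gasharov--Peeva--Welker formula (\ref{lcm}) to recast linear relatedness as a topological condition on the intervals of $L(I)$, and then to translate that into the combinatorics of the graphs $G_I^{(u,v)}$. Since any monomial $w\in L(I)$ with $\deg w>d+1$ is the lcm of at least two distinct generators (each of which lies in $(1,w)$), the open interval $(1,w)$ is nonempty, and (\ref{lcm}) gives $\beta_{1,w}(I)=\dim_K\widetilde{H}_0(\Delta((1,w));K)$. Hence $I$ is linearly related precisely when $\Delta((1,w))$ is connected for every such $w$.

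For the direction that assumes each $G_I^{(u,v)}$ is connected, I would fix $w\in L(I)$ with $\deg w>d+1$ and two elements $x,y\in(1,w)$, and choose generators $u,v\in G(I)$ with $u\mid x$ and $v\mid y$. Then $x,u$ are comparable in $(1,w)$ and hence lie in the same component of $\Delta((1,w))$, and similarly $y,v$, so the problem reduces to connecting $u$ and $v$. By hypothesis there is a path $u=q_0,q_1,\ldots,q_n=v$ in $G_I^{(u,v)}$; for each $k$, $\lcm(q_k,q_{k+1})$ has degree $d+1<\deg w$ and divides $\lcm(u,v)$, hence divides $w$. Thus $\lcm(q_k,q_{k+1})$ is a proper upper bound of $q_k$ and $q_{k+1}$ inside $(1,w)$, so consecutive $q_k,q_{k+1}$ lie in the same component of $\Delta((1,w))$, and therefore so do $u$ and $v$.

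For the converse I would induct on $\deg\lcm(u,v)$. The cases $\deg\lcm(u,v)\in\{d,d+1\}$ are immediate: in the first $u=v$ and $G_I^{(u,v)}$ is a single vertex, while in the second every generator $w$ dividing $\lcm(u,v)$ satisfies $\deg\lcm(w,u)\leq d+1$, so $w$ equals or is adjacent to $u$. For $\deg\lcm(u,v)>d+1$, set $w:=\lcm(u,v)$; by linear relatedness and the reformulation in the first paragraph, $\Delta((1,w))$ is connected. Given vertices $w_1,w_2$ of $G_I^{(u,v)}$, choose a chain $w_1=p_0,p_1,\ldots,p_n=w_2$ in $(1,w)$ with consecutive $p_k,p_{k+1}$ comparable, and pick generators $q_k\mid p_k$ with $q_0=w_1$ and $q_n=w_2$. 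Letting $p$ denote whichever of $p_k,p_{k+1}$ divides the other, both $q_k,q_{k+1}\mid p$, so $\deg\lcm(q_k,q_{k+1})\leq\deg p<\deg w$, and the inductive hypothesis produces a path from $q_k$ to $q_{k+1}$ inside $G_I^{(q_k,q_{k+1})}\subseteq G_I^{(u,v)}$; concatenation yields a path from $w_1$ to $w_2$. The main technical obstacle is exactly this induction step: the chain in $\Delta((1,w))$ consists only of comparabilities in the poset and not of edges of $G_I$, and the crucial point is that passing from each chain element to a generator below it strictly drops the relevant lcm-degree, which is precisely what enables the induction to close.
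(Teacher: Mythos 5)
Your argument is correct and follows essentially the same route as the paper's: the Gasharov--Peeva--Welker formula reduces linear relatedness to connectivity of the order complexes $\Delta((1,w))$, the forward direction bridges consecutive generators on a path via their degree-$(d+1)$ lcms viewed as common upper bounds in $(1,w)$, and the converse is the same induction on $\deg\lcm(u,v)$, descending from a connecting sequence in the order complex to generators below its members. The only (harmless) differences are cosmetic: you work with comparability paths between arbitrary elements of $(1,w)$ and thereby handle every $w\in L(I)$ of degree $>d+1$ directly, whereas the paper first invokes the Taylor complex to restrict to $w=\lcm(u,v)$ and then argues with facets (maximal chains) and their minimal elements.
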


\begin{proof}
By (\ref{lcm}) the ideal $I$ is linearly presented if and only if all the open intervals $(1,w)$ with $w\in L(I)$ and $\deg w>d+1$ are connected. Considering the  Taylor complex of $I$ we see that $\beta_{1,w}(I)=0$ if there exists no $u ,v \in G(I)$ such that $w=\lcm(u,v)$. Thus we need only to consider intervals  $(1,w)$ with $w=\lcm(u,v)$ for some $u ,v \in G(I)$, and hence   $I$ is linearly presented if and only if  $\Delta((1,\lcm(u, v)))$ is connected  for all $u,v\in G(I)$ with $\deg(\lcm(u,v))>d+1$.

We first assume that $G_I^{(u,v)}$ is connected for all  $u,v \in G(I)$. Now let  $u,v \in G(I)$ with $\deg(\lcm(u,v))>d+1$. Let $C$ and $C'$ be  maximal chains of the interval $(1,\lcm(u,v))$ (i.e.\  facets of $\Delta((1,\lcm(u,v)))$). For a chain $D$ in $(1,\lcm(u,v))$  we denote by $\min(D)$ the minimal element in $D$. Obviously, $\min(D)\in V(G_I^{(u,v)})$.  Let $w=\min(C)$ and $w'=\min(C')$. Then $w,w'\in V(G^{(u,v)}_I)$. Hence there exists a sequence $w_{1},\ldots,w_{r}\in V(G^{(u,v)}_I)$ with $w=w_{1}$ and $w'=w_{r}$ and such that the degree of $v_j:=\lcm(w_{j},w_{j+1})$ is $d+1$ for $j=1,\ldots,r-1$. Since $v_j$ divides $\lcm(u,v)$ and since $\deg v_j<\deg (\lcm(u,v))$ it follows that $v_j\in (1,\lcm(u,v))$. Thus there  exist maximal chains $C_j$ and $D_j$ with $w_{j},v_j\in C_j$ and $v_j,w_{j+1}\in D_j$. Consider the sequence of maximal chains
\[
C,C_1,D_1,C_2,D_2,\ldots,C_{r-1},D_{r-1}, C'.
\]
By construction any two successive chains in this sequence have a non-trivial intersection.   This shows that $\Delta((1,\lcm(u,v)))$ is connected.

Conversely, assume that $\Delta((1,\lcm(u,v)))$ is connected for all $u,v \in G(I)$ with $\deg(\lcm(u,v))>d+1$. By induction on $\deg (\lcm(u,v))$ we prove that $G_I^{(u,v)}$ is connected for all $u,v \in G(I)$ with $\deg(\lcm(u,v))>d+1$.

In order to prove this, let  $u,v \in G(I)$ with $\deg(\lcm(u,v))>d+1$, and let $w,w'\in G^{(u,v)}_I$ with $w\neq w'$. There exist maximal chains $C$ and $D$  in $(1,\lcm(u,v))$ with $\min(C)=w$ and $\min(D)=w'$. Since $\Delta((1,\lcm(u,v)))$ is connected, there exist maximal chains $C_1,\ldots,C_r$ in  $\Delta((1,\lcm(u,v)))$ with $C=C_1$ and $C_r=D$ and such that $C_j\sect C_{j+1}\neq \emptyset$  for $j=1,\ldots,r-1$. Let $w_{j}=\min(C_j)$ for $j=1,\ldots,r$. Let  $j$ be such that $w_{j}\neq w_{j+1}$. Then $d+1 \leq \deg (\lcm(w_{j}, w_{j+1}))<\deg (\lcm(u,v))$ because  $\lcm(w_{j}, w_{j+1})$ divides $\lcm(u,v)$, and $\lcm(w_{j}, w_{j+1})\in (1,\lcm(u,v))$ because $C_j\sect C_{j+1}\neq \emptyset$. If $\deg(\lcm(u,v))=d+2$, it follows that  $\deg (\lcm(w_{j}, w_{j+1}))=d+1$ for all $j$  with $w_{j}\neq w_{j+1}$. This shows that  $G_I^{(u,v)}$ is connected whenever $\deg(\lcm(u,v))=d+2$ and establishes the proof of the induction begin.

Suppose now that  $\deg (\lcm(u,v))>d+2$.
Since  $\Delta((1,\lcm(w_{j},w_{j+1})))$ is connected and $\deg (\lcm(w_{j}, w_{{j+1}}))<\deg (\lcm(u,v))$ we may apply our induction hypothesis and deduce that $w_{j}$ and $w_{j+1}$ are connected in $G_I^{(w_{j},w_{j+1})}$. Since $G_I^{(w_{j},w_{j+1})}$ is an induced  subgraph of $G_I^{(u,v)}$  it follows that $w_{j}$ and $w_{{j+1}}$ are also connected in $G_I^{(u,v)}$.  Finally, since $w=w_{1}$ and $w'=w_{r}$. It follows that $G_I^{(u,v)}$ is connected for all $u,v\in G(I)$ with $\deg(\lcm(u,v))>d+1$. If $\deg(\lcm(u,v))\leq d+1$, then $G_I^{(u,v)}$ is obviously connected.
\end{proof}

\begin{Corollary}
\label{connected}
Let $I$ be a monomial ideal generated in degree $d$. Then $I$  is linearly presented if and only if for  all $u, v\in G(I)$  there is a path in $G^{(u,v)}_I$ connecting $u$ and $v$.
\end{Corollary}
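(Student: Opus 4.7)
The plan is to deduce this corollary directly from Proposition~\ref{criterion}. One direction is immediate: if $G^{(u,v)}_I$ is connected, then in particular the two vertices $u$ and $v$ lie in the same connected component, so they are joined by a path. So the content is in the converse.

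For the converse, I will assume that for every pair $u,v\in G(I)$ the vertices $u$ and $v$ are joined by a path in $G^{(u,v)}_I$, and I will verify the hypothesis of Proposition~\ref{criterion}: that every $G^{(u,v)}_I$ is connected. Fix such a pair and let $w$ be an arbitrary vertex of $G^{(u,v)}_I$, i.e.\ $w\in G(I)$ with $w\mid \lcm(u,v)$. The idea is to connect $w$ to $u$ inside $G^{(u,v)}_I$. Since $w$ divides $\lcm(u,v)$, the monomial $\lcm(u,w)$ also divides $\lcm(u,v)$, so $V(G^{(u,w)}_I)\subseteq V(G^{(u,v)}_I)$. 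By the hypothesis applied to the pair $(u,w)$, there is a path from $u$ to $w$ in $G^{(u,w)}_I$.

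The key observation is that this path automatically lives in $G^{(u,v)}_I$. Indeed, each vertex on the path is a vertex of $G^{(u,w)}_I$, hence of $G^{(u,v)}_I$ by the inclusion above. Moreover, whether two vertices form an edge of $G_I$ depends only on the degree of their lcm (namely $d+1$), an intrinsic property of the pair; and $G^{(u,v)}_I$ is defined as an \emph{induced} subgraph of $G_I$. So any edge of $G^{(u,w)}_I$ whose endpoints both lie in $V(G^{(u,v)}_I)$ is itself an edge of $G^{(u,v)}_I$. Thus the entire path from $u$ to $w$ sits in $G^{(u,v)}_I$, showing that every vertex of $G^{(u,v)}_I$ is connected to $u$ there. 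Hence $G^{(u,v)}_I$ is connected, and Proposition~\ref{criterion} yields the result.

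The only step that requires any care is the "induced subgraph" observation in the paragraph above — without it one might worry that some edge of $G^{(u,w)}_I$ has been deleted when passing to $G^{(u,v)}_I$. Beyond that, the argument is essentially a transitivity statement about connectivity and presents no real obstacle.
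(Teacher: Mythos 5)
Your argument is correct and is essentially the same as the paper's: both directions are handled identically, with the converse reducing connectivity of $G^{(u,v)}_I$ to the hypothesis applied to the pair $(u,w)$ and the observation that $\lcm(u,w)\mid\lcm(u,v)$ makes $G^{(u,w)}_I$ an induced subgraph of $G^{(u,v)}_I$. No issues.
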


\begin{proof}
Because of Proposition~\ref{graph} it suffices to show  that the following statements are equivalent:
\begin{enumerate}
\item[(i)]  $G^{(u,v)}_I$ is connected for all $u, v\in G(I)$;
\item[(ii)] for all  $u, v\in G(I)$,  there is a path in $G^{(u,v)}_I$  connecting $u$ and $v$.
\end{enumerate}

(i)\implies (ii) is obvious.

(ii)\implies (i):  Let $w\in G^{(u,v)}_I$ with $w\neq u$.  It is enough to show that $w$ is connected to $u$ by a path in $G^{(u,v)}_I$. By assumption $w$ is connected to $u$ by a path in $G^{(u,w)}_I$. Since $w\in G^{(u,v)}_I$ it follows that $\lcm(u,w)$ divides $\lcm(u,v)$. This implies that $G^{(u,w)}_I$ is an  induced subgraph of $G^{(u,v)}_I$. Thus the path connecting $w$ with $u$ in $G^{(u,w)}_I$ also connects $w$ and $u$  in $G^{(u,v)}_I$.
\end{proof}

Note that, in general, connectedness condition of each subgraph $G_I^{(u,v)}$ given in Corollary \ref{connected} can not be replaced with the connectedness of the graph $G_I$. Let $I=(x^4, x^3y, x^3z, x^2y^2,  x^2z^2, xy^3, xz^3, y^4, y^3z, y^2z^2, yz^3, z^4)\subset K[x,y,z]$. Then $G_I$ is connected, while $I$ is not linearly presented. Indeed, there is no path between $x^2y^2$ and $x^2z^2$ in $G_I^{(x^2y^2,x^2z^2)}$.

\section{Powers of edge ideals of index $>1$}\label{112}
Let $\MM$ be the set of all monomial ideals of $S$ generated in degree two and $\MT$ be the set of all graphs on the vertex set $[n]$  which do not  have double edges but may have loops. There is an obvious  bijection between $\MM$ and $\MT$. Indeed, if $I\in \MM$ then  the graph $G\in \MT$ corresponding to $I$ has the edge set $E(G)=\{\{i,j\}\: x_ix_j\in G(I)\}$. In case $i=j$,  the corresponding edge is a loop.


\begin{Theorem}
\label{main}
Let $G$ be a finite graph (possibly with loops) and  let $I$ be its edge ideal. The following conditions are equivalent:
\begin{itemize}
\item[(a)] $G$ is gap free;
\item[(b)] $\index (I^{k})>1$ for all $k\geq 1$, i.e. $I^k$ is linearly presented for all $k\geq 1$;
\item[(c)] $\index(I^{k})>1$ for some $k\geq 1$, i.e.  $I^k$ is linearly presented for some $k\geq 1$.
\end{itemize}
\end{Theorem}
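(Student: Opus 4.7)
The plan is to prove the three implications separately. (b) $\Rightarrow$ (c) is immediate, so the work is in (a) $\Rightarrow$ (b) and (c) $\Rightarrow$ (a). Corollary~\ref{connected} reduces linear-relatedness of $I^k$ to the connectivity of the graph $G^{(u,v)}_{I^k}$ for every pair of distinct $u,v \in G(I^k)$.

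For (c) $\Rightarrow$ (a) I argue the contrapositive. Assume $G$ contains a gap: disjoint edges $e, e' \in E(G)$ with vertex sets $V_e, V_{e'}$ satisfying $V_e \cap V_{e'} = \emptyset$ and no edge of $G$ meeting both. For each $k \geq 1$ take $u := e^k$ and $v := (e')^k$ in $G(I^k)$. Every $w \in V(G^{(u,v)}_{I^k})$ has $\supp(w) \subseteq V_e \cup V_{e'}$, and the gap assumption forces every factor in any decomposition of $w$ as a product of $k$ generators of $I$ to be supported entirely in $V_e$ or entirely in $V_{e'}$. Setting
\[
\phi(w) := \sum_{\ell \in V_e} x_\ell(w),
\]
each $V_e$-factor contributes $2$ and each $V_{e'}$-factor contributes $0$, so $\phi(w)$ is even on every vertex of $G^{(u,v)}_{I^k}$. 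An edge of $G^{(u,v)}_{I^k}$ corresponds to a single-variable swap $w' = w \cdot x_\beta / x_\alpha$ with $\alpha, \beta \in V_e \cup V_{e'}$, along which $\phi$ changes by $[\beta \in V_e] - [\alpha \in V_e] \in \{-1, 0, +1\}$; the parity constraint at both endpoints forces this change to be zero. Hence $\phi$ is constant on connected components of $G^{(u,v)}_{I^k}$, and since $\phi(u) = 2k \neq 0 = \phi(v)$, $u$ and $v$ lie in distinct components. Thus $\index(I^k) = 1$ for every $k$, contradicting (c).

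For (a) $\Rightarrow$ (b) fix $k$ and argue by induction on $d(u,v) := \deg \lcm(u,v) - 2k$ that any two distinct $u,v \in G(I^k)$ are connected in $G^{(u,v)}_{I^k}$. The cases $d(u,v) \leq 1$ are trivial. For $d(u,v) \geq 2$, put $A := \{\ell : u_\ell > v_\ell\}$ and $B := \{\ell : u_\ell < v_\ell\}$, both nonempty. The strategy is to produce a neighbour $w$ of $u$ in $G^{(u,v)}_{I^k}$ of the form $w = u \cdot x_c / x_b$ with $b \in A$ and $c \in B$; a direct computation gives $d(u,w) = 1$ and $d(w,v) = d(u,v) - 1$, so the induction hypothesis applied to $(w,v)$ inside the induced subgraph $G^{(w,v)}_{I^k} \subseteq G^{(u,v)}_{I^k}$ completes the proof. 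To construct such a $w$ I fix factorizations $u = \prod_i e_i$ and $v = \prod_j f_j$ into generators of $I$ (edges of $G$, possibly loops). The existence of $w \in G(I^k)$ reduces to finding $b \in A$, $c \in B$, and a factor $e = \{b,a\}$ of $u$ with $\{a,c\} \in E(G)$: replacing $e$ by $\{a,c\}$ in the factorization then exhibits $w$ as a product of $k$ edges. Starting from arbitrary $b \in A$, $c \in B$, I choose factors $e = \{b,a\}$ of $u$ and $f = \{c,c'\}$ of $v$ containing $x_b$ and $x_c$ respectively; gap-freeness applied to $e$ and $f$ supplies either a shared vertex or a bridging edge $g \in E(G)$ meeting both, and a case analysis over the possible vertex identifications (handling also the loop subcases $a = b$ and $c' = c$) extracts an edge of the required form, possibly after re-choosing $b$, $c$, $e$, or $f$.

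The main obstacle will be this case analysis: when the bridging edge meets $e$ or $f$ at the ``wrong'' vertex the direct pivot fails, and one must either re-select the starting data or pivot on the $v$-side by symmetry. The unifying observation is that the simultaneous failure of every possible pivot would produce a pair of edges in $G$ admitting no configuration compatible with gap-freeness, which is ruled out by choosing the factors $e \in H_u$ and $f \in H_v$ carefully.
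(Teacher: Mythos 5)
Your implications (b)$\Rightarrow$(c) and (c)$\Rightarrow$(a) are fine. In fact your parity invariant $\phi(w)=\sum_{\ell\in V_e}\deg_{x_\ell}(w)$ for (c)$\Rightarrow$(a) is a clean alternative to the paper's argument (which instead shows directly that $u=(x_ix_{i'})^k$ has no neighbour at all in $G^{(u,v)}_{I^k}$), and it has the minor advantage of handling gaps involving loops uniformly. The computation that an edge of $G^{(u,v)}_{I^k}$ is a single-variable swap, so that $\phi$ can only change by $0,\pm1$ and parity forces $0$, is correct.

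The gap is in (a)$\Rightarrow$(b). Your induction on $d(u,v)=\deg\lcm(u,v)-2k$ stands or falls with the claim that there always exist $b$ with $u_b>v_b$ and $c$ with $u_c<v_c$ such that $ux_c/x_b\in G(I^k)$ (or the symmetric swap at $v$): only such a swap satisfies both $d(u,w)=1$ and $d(w,v)=d(u,v)-1$. You do not prove this claim; you describe a case analysis and assert that its simultaneous failure would contradict gap-freeness, but that is exactly the hard point and it is not a one-line consequence of the definition. Concretely: gap-freeness applied to a factor $e=\{b,a\}$ of $u$ (with $u_b>v_b$) and a factor $f=\{c,c'\}$ of $v$ (with $u_c<v_c$) may only produce a bridge of the form $\{b,c\}$ or $\{a,c'\}$; if moreover $u_a\le v_a$ and $v_{c'}\le u_{c'}$, neither pivot $ux_c/x_b$, $ux_{c'}/x_b$, $vx_b/x_c$, $vx_a/x_c$ is a legal \emph{decreasing} swap, and ``re-choosing $e$ or $f$'' must then be justified by combining information from several factor pairs at once. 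Note that the swaps gap-freeness most readily supplies are of the form $ux_c/x_p=(u/\{p,a\})\cdot\{a,c\}$ where $p$ is merely \emph{some} endpoint of a factor of $u$, with no guarantee that $u_p>v_p$; such a $w$ still divides $\lcm(u,v)$ and is a neighbour of $u$, but it does not decrease $d$, so your induction cannot use it. This is precisely why the paper inducts on $k$ rather than on $\deg\lcm(u,v)$: its cases (i)--(iii) only arrange that the neighbour $w$ shares the factor $\tilde v$ with $v$, so that the pair $(w/\tilde v,\,v/\tilde v)$ is handled by the induction hypothesis for $I^{k-1}$; and in its case (iv) (a factor $x_ix_j$ of $v$ with neither variable dividing $u$) the paper does not produce a single-swap neighbour of $u$ at all --- it takes $w=(u/\tilde u)\tilde v$ with $\deg\lcm(u,w)=2k+2$ and connects $u$ to $w$ by a second appeal to the induction on $k$ through their common factor. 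Case (iv) is exactly where your single-swap claim is most in doubt, and your sketch offers no argument for it. Either prove the decreasing-swap lemma in full (I could not find a counterexample, but every attempted proof I ran required a multi-stage analysis of the kind you only gesture at), or switch to the paper's induction on the exponent $k$ via common edge-factors, which needs only the weaker, easily available swaps.
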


\begin{proof}
(a)\implies (b):  Note that in case $k=1$ the equivalence of (a) and (b) has been proved in \cite[Corollary~2.9]{DHS}. Here we give a direct proof for the more general case: first we show that if $G$ is gap free, then $I$ is linearly presented. Then we prove that if  an edge ideal $I$ is linearly presented, then all its powers are linearly presented as well.

Using Corollary~\ref{connected}, to show that $I$ is linearly presented,  it is enough to prove that  for all $u, v\in G(I)$  there is a path in  $G^{(u,v)}_{I}$ connecting $u$ and $v$. 
Hence   $u=x_ix_j$ and $v=x_{i'}x_{j'}$ with $\{i, j\}, \{i', j'\}$ edges in $G$.  If $\deg(\lcm(u,v))=3$, then, by definition, $\{u,v\}\in E(G^{(u,v)}_{I})$ and so we are done. Suppose $\deg (\lcm(u,v))=4$. Our assumption implies that at least one of the edges  $\{i, i'\}, \{i, j'\}, \{i', j\}, \{i', j'\}$ is in $E(G)$. Without loss of generality we may assume that $\{i, i'\}\in E(G)$. Set $w=x_ix_{i'}$. So $w\in V(G^{(u,v)}_{I})$ and $w$ connects $u$ to $v$. Therefore $I$ is linearly presented.

\medskip
Now we prove that if $I$ is linearly presented, then $I^k$ is linearly presented for all $k\geq 1$. Using Corollary~\ref{connected}, it is enough to show that  for all $u, v\in G(I^k)$  there is a path in  $G^{(u,v)}_{I^k}$ connecting $u$ and $v$. We prove this by induction on $k$. Since $I$ is linearly presented   there is a path in  $G^{(u,v)}_{I}$ connecting $u$ and $v$ for all $u, v\in G(I)$. Let $k>1$, and suppose that $G^{(u,v)}_{I^{k-1}}$ is connected for all $u, v\in G(I^{k-1})$ with  $\deg(\lcm(u,v))>2(k-1)+1$.

 Assume that    $u/\tilde{w}, v/\tilde{w}\in G(I^{k-1})$ for some $\tilde{w}\in G(I)$. Since $\deg (\lcm(u/\tilde{w}, v/\tilde{w}))> 2(k-1)+1$, our  induction hypothesis  implies that there is a path  $w_0, w_1, \ldots, w_r$ in $G^{({u}/{\tilde{w}},{v}/{\tilde{w}})}_{I^{k-1}}$ with   $w_0={u}/{\tilde{w}}$ and $w_r={v}/{\tilde{w}}$. Since  $\deg (\lcm(w_i,w_{i+1})) =2(k-1)+1$ it follows that $\deg (\lcm(\tilde{w}w_i, \tilde{w}w_{i+1}))=2k+1$ for all $0\leq i\leq r-1$, and since  $\tilde{w}w_j\in V(G^{(u,v)}_{I^{k}})$ for all $j$,   the sequence $u=\tilde{w}w_0, \tilde{w}w_1, \ldots, \tilde{w}w_r=v $ is a path in $G^{(u,v)}_{I^{k}}$ connecting   $u$ and $v$.

We may now  suppose that  $u/\tilde{w}, v/\tilde{w}\notin G(I^{k-1})$ for all $\tilde{w}\in G(I)$.
Since $u\neq v$ and $\deg u=\deg v$, there is an index $ i$ with  $\deg_{x_i}v>\deg_{x_i}u$. In particular, there exists $\tilde{v}\in G(I)$ such  that $v/\tilde{v}\in G(I^{k-1})$ and $\tilde{v}=x_ix_j$ for some  $j$. In the further discussions we will distinguish four cases.
\begin{itemize}
\item[(i)] $\deg_{x_i}u\neq 0$ and $\deg_{x_j}u\neq 0$,
\item[(ii)] $\deg_{x_i}u\neq 0$ and $\deg_{x_j}u= 0$,
\item[(iii)] $\deg_{x_i}u= 0$ and $\deg_{x_j}u\neq 0$,
\item[(iv)] $\deg_{x_i}u= 0$ and $\deg_{x_j}u= 0$.
\end{itemize}
 We now first consider the cases (i), (ii) and (iii) and construct in these cases $w\in V(G^{(u,v)}_{I^{k}})$ such that  following conditions hold:
\begin{itemize}
\item[($\alpha$)] $\deg (\lcm(u,w)) =2k+1$;
\item[($\beta$)] $w/\tilde{v}\in G({I^{k-1}})$.
\end{itemize}
Condition ($\alpha$) implies that $\{u,w\}\in E(G^{(u,v)}_{I^{k}})$.  In the case that $\deg(\lcm(w,v))\leq 2k+1$,  $w$ is connected to $v$ in $G^{(u,v)}_{I^{k}}$, and so $u$ and $v$ are connected in $G^{(u,v)}_{I^{k}}$. If   $\deg(\lcm(w,v))>2k+1$, then condition ($\beta$)   allows us  to use  induction on $k$ as before,  and to conclude that $w$ is connected to $v$ by a path in  $G^{(u,v)}_{I^{k}}$, and hence $u$ and $v$ are connected in $G^{(u,v)}_{I^{k}}$.  Thus ($\alpha$) together with  ($\beta$) implies that there is a path in $G^{(u,v)}_{I^{k}}$ connecting $u$ and $v$.

\medskip
There exists a factor $\tilde{u}\in G(I)$  of $u$ such that $u/\tilde{u}\in G(I^{k-1})$ which in the cases (i) and (iii) is of the form $x_jx_{i_1}$ for some $i_1$  and in case (ii) is of the form $x_ix_{i_2}$ for some $i_2$.  It is seen that $\tilde{v}\neq \tilde{u}$, since otherwise $u/\tilde{v}, v/\tilde{v}\in G(I^{k-1})$, a contradiction. It follows that $i_1\neq i$ and $i_2\neq j$.

Let $w=(u/\tilde{u})\tilde{v}$.
Then $w\in G(I^k)$.

In case (i), $\deg_{x_i}w= \deg_{x_i}u+1$. Since  $\deg_{x_i}v>\deg_{x_i}u$, it follows  that $\deg_{x_i}w\leq \deg_{x_i}v$. We also note that $\deg_{x_j}w=\deg_{x_j}u$ and $\deg_{x_{i_1}}w=\deg_{x_{i_1}}u-1\leq \deg_{x_{i_1}}u$.

 In case (ii), $\deg_{x_i}w= \deg_{x_i}u$. Moreover, $\deg_{x_j}w=\deg_{x_j}u+1=1$  because $x_j$ does not divide $u$. However, since $x_j$ divides $v$, it follows that $\deg_{x_j}w\leq \deg _{x_j}v$. Finally   $\deg_{x_{i_2}}w=\deg_{x_{i_2}}u-1\leq \deg_{x_{i_2}}u$.

In case (iii), $\deg_{x_i}w= \deg_{x_i}u+1=1$ because $x_i$ does not divide $u$. However, since $x_i$ divides $v$, it follows that $\deg_{x_i}w\leq \deg _{x_i}v$.    Moreover, $\deg_{x_j}w=\deg_{x_j}u$ and $\deg_{x_{i_1}}w=\deg_{x_{i_1}}u-1\leq \deg_{x_{i_1}}u$.

Thus in all the three  cases  $\deg_{x_t}w \leq\deg_{x_t}(\lcm (u, v))$ for all  variables $x_t$. Therefore $w$ divides $\lcm(u,v)$, and so by definition $w\in V(G^{(u,v)}_{I^{k}})$.

Note that $\deg (\lcm(u, w))=2k+1$, and $w/\tilde{v}=u/\tilde{u}$ which implies that $w/\tilde{v}\in G({I^{k-1}})$. Therefore the assertion follows in these three cases.

\medskip
 Now we consider  case (iv).  Let $\tilde{u}\in G(I)$ with  $u/\tilde{u}\in G(I^{k-1})$, and let $w=(u/\tilde{u})\tilde{v}$ with $\tilde{v}$ as above. Then $w\in G(I^k)$. Since neither $x_i$ nor $x_j$ divides $u$, we have $\deg_{x_i}w=1=\deg_{x_j}w$. Thus $\deg_{x_t}w \leq\deg_{x_t}(\lcm (u, v))$ for all  variables $x_t$. It follows that $w$ divides $\lcm(u,v)$ and so $w\in V(G^{(u,v)}_{I^{k}})$.

 Moreover, $w\neq v$. Indeed,  suppose that  $w= v$. Then $v/\tilde{v}=u/\tilde{u}$.  Let $\tilde{w}\in G(I)$ with $(v/\tilde{v})/\tilde{w}\in G(I^{k-2})$. Then  $v/\tilde{w}, u/\tilde{w}\in G({I^{k-1}})$, a contradiction.

 Furthermore, $w/\tilde{v}, v/\tilde{v}\in G({I^{k-1}})$, and so if $\deg(\lcm(w,v))>2k+1$, by using the induction hypothesis there exists a path between $w/\tilde{v}$ and $v/\tilde{v}$ in $G^{(w/\tilde{v},v/\tilde{v})}_{I^{k-1}}$. As above this implies that $v$ and $w$ are connected in $G^{(w,v)}_{I^{k}}$ and hence in $G^{(u,v)}_{I^{k}}$, since $G^{(w,v)}_{I^{k}}$ is a subgraph of $G^{(u,v)}_{I^{k}}$. In the case that $\deg(\lcm(w,v))\leq 2k+1$, it is obvious that $v$ and $w$ are connected in $G^{(u,v)}_{I^{k}}$. Also by construction of $w$, we have $\deg(\lcm(u,w))>2k+1$, and the monomials $u$ and $w$ have a common factor, say $\tilde{w}\in G(I)$ such that  $w/\tilde{w}, u/\tilde{w}\in G({I^{k-1}})$. Again by using our induction hypothesis, we conclude that there exists a path between $w$ and $u$ in $G^{(u,v)}_{I^{k}}$.
 Therefore $u$ and $v$ are connected in $G^{(u,v)}_{I^{k}}$ also in this case.

\medskip
(b)\implies (c) is  obvious.

\medskip
(c)\implies (a):  Assume that $G$  is not gap free. Thus there exist four different vertices $i, i', j, j'$ such that   $\{i, i'\}, \{j, j'\}\in E(G)$ while the edges $\{i, j \}, \{i, j'\}, \{i', j\}$ and $\{i', j'\}$ are not in $E(G)$. Let $k>0$ be an arbitrary integer. We will show that $u=(x_{i}x_{i'})^k$ and $v=(x_{j}x_{j'})^k$ are not connected in $G_{I^k}^{(u, v)}$. Suppose that  they are connected. Then there  exists a monomial $w\in V(G_{I^k}^{(u, v)})$ such that $\deg (\lcm(u,w))=2k+1$.  Clearly, $w\in V(G_{I^k}^{(u, v)})$  yields that $w\in G(I^k)$  with the property that $w$ divides $\lcm(u,v)$. Moreover, $\deg (\lcm(u,w))=2k+1$ implies that either $x_i^kx_{i'}^{k-1}$ or $x_i^{k-1}x_{i'}^{k}$ divides $w$.

Note that $\lcm(u, v)=x_i^kx_{i'}^kx_{j}^kx_{j'}^k$ and since  $w$ divides $\lcm(u,v)$, either $x_j$ or $x_{j'}$ divides $w$.  This means that one of $\{i, j \}$, $\{i, j'\}$, $\{i', j\}$ or $\{i', j'\}$ must be an edge of $G$ which is a contradiction. Therefore, $u$ and $v$ are not connected in $G_{I^k}^{(u, v)}$. Corollary~\ref{connected} implies that $I^k$ is not linearly presented, a contradiction.
\end{proof}

\begin{Examples}\rm
(a) Let $G$ be a tree  on the vertex set $[n]$ and let $I$ be its edge ideal. Then either $\index(I^k)=1$ or $\index(I^k)=\infty$ for any $k>0$. Indeed, suppose that   $\index(I)=t<\infty$.  If $n\leq 4$, 
then either $\height(I)=1$ or $I=(x_1x_2,x_2x_3,x_3x_4)$. In both cases it is clear that $I$ has a linear resolution.
 Now let $n>4$. By \cite[Theorem 2.1]{EGHP} there exists a minimal cycle $C$ of length $t+3$ in $\bar{G}$. Suppose that $V(C)=\{1, 2, \ldots, t+3\}$ and $E(C)=\{\{i, i+1\}\:\ 1\leq i\leq t+2\}\cup\{\{1, t+3\}\}$.   If $t>1$, then $|V(C)|\geq 5$ and since $C$ is minimal we have $\{1,3\}, \{1,4\}, \{2,4\}, \{2,5\}, \{3,5\}\notin E(\bar{G})$. Therefore there exists a cycle  in $G$, a contradiction.

Using \cite[Theorem 3.2]{HHZ2}, if $\index(I)=\infty$, then $\index(I^k)=\infty$  for any $k>0$. Moreover,  using Theorem \ref{main}, if $\index(I)=1$, then $\index(I^k)=1$  for any $k>0$.

(b) Let $G$ be a simple graph on the vertex set $\{x_{1}, x_{2}, \ldots, x_{n}\}$. The {\em whisker graph} $W(G)$  of $G$ is a simple graph  whose vertex set is $\{x_{1}, x_{2}, \ldots, x_{n}\}\cup\{y_1, y_2, \ldots, y_n\}$, where $y_1, \ldots, y_n$ are new vertices. The edge set of $W(G)$ is $E(G)\cup \{\{x_{i}, y_{i}\}: 1\leq i\leq n\}$.
Furthermore,  by  $ L(G)$ we denote a graph obtained  from $G$ by adding a loop to each of  its vertices and call it the {\em loop graph} of $G$.

Again as a consequence of Theorem~\ref{main}, it follows together with  \cite[Theorem 3.2]{HHZ2} that $I(L(G))^k$ has a linear resolution for all $k$ if and only if $G$ is  complete, and $\index (I(L(G))^k)=1$ for all $k$ if and only if $G$ is not complete. A similar statement holds for $I(W(G))$, because $I(W(G))$ is obtained from $I(L(G))$ by polarization.
\end{Examples}

 \section{Edge ideals of maximal finite index}
In this section we classify   those graphs whose edge ideal has maximal finite index.  In particular our aim is to prove the following result.

\begin{Theorem}
\label{main2}
Let $n\geq 4$, and let  $G$ be a simple graph on the vertex set $[n]$ with no isolated vertices, and let $I$ be  its edge ideal. The following conditions are equivalent:
\begin{itemize}
\item[(a)] The complement $\bar{G}$ of $G$ is a cycle of length $n$;
\item[(b)] $\index(I)=\projdim(I)$.
\end{itemize}
If the equivalent conditions hold, then $\projdim(I)=n-3$.
\end{Theorem}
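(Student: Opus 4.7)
The plan is to apply Hochster's formula,
\[
\beta_{i,|W|}(I(G)) \;=\; \sum_{W\subseteq[n]} \dim_K \tilde H_{|W|-i-2}(\Cl(\bar G[W]);K),
\]
in tandem with the EGHP characterization \cite[Theorem 2.1]{EGHP}: $\index(I(G))=\ell-3$, where $\ell\geq 4$ is the length of a shortest induced cycle of $\bar G$ (and $\index=\infty$ precisely when $\bar G$ is chordal). For (a)$\Rightarrow$(b), assume $\bar G=C_n$. The only induced cycle of $\bar G$ has length $n$, giving $\index(I)=n-3$, and each $\bar G[W]$ is either $C_n$ itself (clique complex $\simeq S^1$, contributing $n-3$ to $\projdim$) or a disjoint union of paths (only $\tilde H_0$ can be nonzero, contributing at most $n-4$). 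Hence $\projdim(I)=n-3=\index(I)$, which also gives the final assertion of the theorem.

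For (b)$\Rightarrow$(a), set $t=\index(I)=\projdim(I)<\infty$; EGHP then provides a shortest induced cycle $C$ of $\bar G$ of length $\ell=t+3$. My goal is to show that every $w\in[n]\setminus V(C)$ is adjacent in $\bar G$ to every other vertex of $[n]$, so that $w$ becomes isolated in $G$, contradicting the hypothesis; this will force $V(\bar G)=V(C)$ and hence $\bar G=C=C_n$. I carry this out in two Hochster tests. First, take $W=V(C)\cup\{w\}$: since $C$ is induced, $\bar G[W]$ consists of $C$ together with edges from $w$ to its $s$ neighbors in $V(C)$, and $\Cl(\bar G[W])$ has only the triangles $\{w,v_i,v_{i+1}\}$ at consecutive pairs of neighbors of $w$ on $C$, with no higher-dimensional simplex (no chord of $C$ exists). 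Writing $\Cl(\bar G[W])=C\cup\st(w)$ and using Mayer--Vietoris with $C\cap\st(w)$ equal to the induced subgraph on the neighbors of $w$ in $V(C)$ (a disjoint union of paths), one checks that $\tilde H_1(\Cl(\bar G[W]))\neq 0$ whenever $s<\ell$. By Hochster this would give $\projdim(I)\geq(\ell+1)-1-2=\ell-2>t$, a contradiction, so $s=\ell$.

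Second, for two such vertices $w_1,w_2\in[n]\setminus V(C)$ (each now adjacent in $\bar G$ to every vertex of $V(C)$), take $W=V(C)\cup\{w_1,w_2\}$. Then $\bar G[W]$ is the graph join $C*\bar G[\{w_1,w_2\}]$, hence $\Cl(\bar G[W])\simeq S^1*\Cl(\bar G[\{w_1,w_2\}])$. If $w_1w_2\in E(\bar G)$, this is a join with a $1$-simplex and is contractible; if $w_1w_2\notin E(\bar G)$, it is $S^1*S^0\simeq S^2$, forcing $\tilde H_2\neq 0$ and $\projdim(I)\geq(\ell+2)-2-2=\ell-2>t$, again a contradiction. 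Hence $w_1w_2\in E(\bar G)$, finishing the claim and the theorem. The main obstacle is the first test: confirming via the Mayer--Vietoris (or an equivalent Euler-characteristic) argument that $\Cl(\bar G[V(C)\cup\{w\}])$ has nontrivial $\tilde H_1$ as soon as $w$ misses a single vertex of $C$ in $\bar G$; the second test is then a one-line topological consequence of the join formula for clique complexes.
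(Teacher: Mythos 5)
Your proposal is correct and follows essentially the same route as the paper: the EGHP characterization supplies the induced $(t+3)$-cycle $C$ in $\bar G$, and Hochster's formula applied to $W=V(C)\cup\{w\}$ (a nonvanishing $\widetilde H_1$ forcing every outside vertex to be adjacent in $\bar G$ to all of $V(C)$) and to $W=V(C)\cup\{w_1,w_2\}$ (the $S^2$ join argument) yields the isolated-vertex contradiction, exactly the two topological tests underlying the paper's Lemma 3.2 and Proposition 3.3. The only differences are cosmetic: you prove (a)$\Rightarrow$(b) by a direct Hochster computation where the paper cites EGHP, and you bypass the paper's intermediate step $\beta_{t,t+2}(I)=0$ by invoking the $\widetilde H_1$-obstruction directly.
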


To prove this theorem we need some intermediate steps. We first observe the following fact which will be used several times in the sequel:

\medskip
Let $G$ be a graph on the vertex set $[n]$ and $\Delta(G)$ be its clique complex. We have
\begin{eqnarray*}
\label{Delta1}
\Delta(G_W)=\Delta(G)_W.
\end{eqnarray*}
In other words,  $\Delta(G)_W$ is the clique complex of induced subgraph of $G$ on the vertex set $W$. Moreover,  $G_W$ is connected if and only if $\Delta(G)_W$ is connected. Here $G_W$ (resp.\ $\Delta(G)_W$) denotes the induced subgraph  of $G$ (resp.\ the induced subcomplex of $\Delta(G)$) whose vertex set is $W$.

\begin{Lemma}
\label{index}
Let $n\geq 4$, and let $G$ be a simple graph on the vertex set $[n]$ and $I$ its edge ideal. Suppose that  $\index(I)=\projdim(I)=t$. Then $\beta_{t,t+2}(I)=0$.
\end{Lemma}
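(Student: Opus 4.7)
The plan is to argue by contradiction, translating the assertion into graph theory via Hochster's formula. Using that the Stanley--Reisner complex $\Delta$ of $S/I$ is the independence complex of $G$ (i.e.\ the clique complex of $\bar G$) and that $\Delta|_W$ is (dis)connected iff $\bar G_W$ is (dis)connected (the observation preceding the lemma), Hochster yields
\[
\beta_{t,t+2}(I)=\sum_{|W|=t+2}\dim_K\tilde H_0(\Delta|_W;K),
\]
so assuming $\beta_{t,t+2}(I)\ne 0$ produces a set $W_0\subseteq[n]$ with $|W_0|=t+2$ for which $\bar G_{W_0}$ is disconnected; let its components be $C_1,\dots,C_s$ with $s\ge 2$. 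From $\projdim(I)=t$ we have $\beta_{t+1,j}(I)=0$ for every $j$, so by Hochster $\tilde H_0(\Delta|_W;K)=0$ for every $W$ with $|W|=t+3$, i.e.\ $\bar G_W$ is connected for every such $W$. Consequently every $v\in[n]\setminus W_0$ must be $\bar G$-adjacent to at least one vertex in each component $C_i$.

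Using $\index(I)=t$ together with \cite[Theorem 2.1]{EGHP}, $\bar G$ contains an induced cycle $C$ of length exactly $t+3$ while having no induced cycle of any length in $\{4,5,\dots,t+2\}$. Label $V(C)=\{u_1,\dots,u_{t+3}\}$ in cyclic order and, for any $x\in[n]$, put $S_x:=N_{\bar G}(x)\cap V(C)$. The key shortening observation is: if two vertices $u_i,u_j\in S_x$ are separated along $C$ by a cyclic-distance $g$ with $2\le g\le t$, and none of the $g-1$ intermediate cycle vertices lies in $S_x$, then $x,u_i,u_{i+1},\dots,u_j$ is an induced cycle in $\bar G$ of length $g+2\in[4,t+2]$, contradicting EGHP. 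Hence all gaps between consecutive elements of $S_x$ around $C$ lie in $\{1\}\cup\{t+1,t+2,t+3\}$, and a short arithmetic check (for $t\ge 2$) forces each $S_x$ to be empty, a singleton, a consecutive arc of $V(C)$ of size two or three, or the entire $V(C)$.

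To conclude, combine this rigidity with the covering statement from the first paragraph. Each component satisfies $|C_i|\le t+1<t+3=|V(C)|$, while every $v\in V(C)\setminus W_0$ contributes a $\bar G$-neighbour in every $C_i$. Pigeonhole together with the restrictive arc structure of the $S_x$ allows one to locate vertices $x\in C_i$ and $y\in C_j$ with $i\ne j$ that share two common $\bar G$-neighbours $u_k,u_\ell\in V(C)$ with $u_ku_\ell$ not a cycle edge; then $\{x,u_k,y,u_\ell\}$ induces a $4$-cycle in $\bar G$ (since $xy\notin E(\bar G)$ because $x,y$ lie in different components of $\bar G_{W_0}$, and $u_ku_\ell\notin E(\bar G)$ because $C$ is induced), contradicting $\index(I)\ge 2$. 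The remaining case $t=1$ is handled directly: $\projdim(I)=1$ combined with the structure of edge ideals forces $G$ to be a $2$-matching (up to isolated vertices), and one checks by inspection that for such $G$ every induced $3$-vertex subgraph of $\bar G$ is connected, so $\beta_{1,3}(I)=0$.

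The main obstacle is the case-analysis in the third paragraph: organising how $V(C)$ distributes between $W_0$ and its complement, and how the $S_x$-arcs cover $V(C)$, so as to force the existence of two cross-component vertices sharing two non-consecutive cycle-neighbours. The strong arithmetic restrictions derived from the shortening observation are what make this analysis tractable.
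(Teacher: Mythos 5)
Your first two paragraphs are sound: the reduction via Hochster's formula to connectivity of $(\bar G)_W$ for $|W|=t+2$, the deduction from $\beta_{t+1,t+3}(I)=0$ that every $v\notin W_0$ has a $\bar G$-neighbour in each component of $(\bar G)_{W_0}$, and the shortening observation restricting each $S_x$ (for $x\notin V(C)$) to be empty, a singleton, a consecutive arc of size $2$ or $3$, or all of $V(C)$, are all correct. The gap is in the concluding step, which is asserted rather than proved, and whose target configuration need not exist. Concretely, take $t=3$, $V(C)=\{1,\dots,6\}$, $W_0\cap V(C)=\emptyset$, and components $C_1=\{x_1,x_2\}$, $C_2=\{y_1,y_2,z\}$ with $S_{x_1}=\{1,2,3\}$, $S_{x_2}=\{4,5,6\}$, $S_{y_1}=\{2,3,4\}$, $S_{y_2}=\{5,6,1\}$. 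All of your derived constraints (each component's arcs cover $V(C)$, and each $S_x$ is an admissible arc) are satisfied, yet every cross-component intersection $S_x\cap S_y$ is a single vertex or a consecutive pair, so no induced $4$-cycle $\{x,u_k,y,u_\ell\}$ with $x\in C_1$, $y\in C_2$ and $u_ku_\ell$ a non-edge exists. A contradiction is still available in this configuration --- $x_1,3,4,x_2$ is an induced $4$-cycle because $x_1x_2\in E(\bar G)$ --- but it uses a \emph{same-component} pair together with an edge of $C$, a case your sketch never considers. Further uncharted cases arise when $W_0\cap V(C)\neq\emptyset$ (your arc lemma says nothing about $x\in V(C)$) and when some $S_x=V(C)$. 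So the ``pigeonhole'' at the heart of the argument is a genuine missing piece, not a routine verification.

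The underlying reason the combinatorics becomes delicate is that you extract from $\projdim(I)=t$ only the vanishing $\beta_{t+1,t+3}(I)=0$, i.e., connectivity of $(t+3)$-subsets. The paper's proof exploits higher homology of larger restrictions: if some $j\notin V(C)$ misses a vertex of $C$, then $\Delta_{V(C)\cup\{j\}}$ has nonvanishing $\widetilde H_1$, whence $\beta_{t+1,t+4}(I)\neq 0$; and if two vertices $j,j'$, each adjacent to all of $V(C)$, are nonadjacent to each other, then $\Delta_{V(C)\cup\{j,j'\}}$ is a suspension of $C$ with nonvanishing $\widetilde H_2$, whence $\beta_{t+1,t+5}(I)\neq 0$. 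These two facts force every vertex of $W\setminus V(C)$ to be adjacent to all of $V(C)$ and any two such vertices to be adjacent, which yields connectivity of $(\bar G)_W$ with almost no case analysis. To complete your route you would have to carry out the full analysis indicated above (same-component configurations, components meeting $V(C)$, and induced cycles of lengths other than $4$), or, more efficiently, bring $\widetilde H_1$ and $\widetilde H_2$ of the larger restrictions into play as the paper does.
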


\begin{proof}
Let $\Delta=\Delta(\bar{G})$. By using Hochster's formula \cite[Theorem 8.1.1]{HHBook}, it is enough to show that $\widetilde{H}_{0}(\Delta_{W}; K)= 0$ for any $W\subset [n]$ with $|W|=t+2$. To show this, it is sufficient to prove that $\Delta_{W}$ (equivalently $(\bar{G})_W$) is connected for any $W\subset [n]$ with $|W|=t+2$.

\medskip
Since $\index(I)=t$,  \cite[Theorem 2.1]{EGHP} implies  that there exists a minimal cycle $C$ of length $t+3$ in $\bar{G}$. Without loss of generality we may suppose that $V(C)=\{1, 2, \ldots, t+3\}$ and $E(C)=\{\{i, i+1\} \:\ 1\leq i\leq t+2\}\cup \{\{1, t+3\}\}$.

\medskip
Let $W$ be a subset of $[n]$ with $|W|=t+2$. We consider different cases for $W$ and prove in each case that $(\bar{G})_W$ is connected.

First assume that $W\subset V(C)$.  Since  $V(C)$ has just one vertex more than $W$ we see that $(\bar{G})_W$ is a path
and thus  it is connected.

Now assume that $W\setminus V(C)\neq \emptyset$.
We first claim that for all $j\in W\setminus V(C)$ and all $i\in V(C)$ we have  $\{j, i\}\in E(\bar{G})$.
Indeed, suppose that $\{j, i\}\notin E(\bar{G})$ for some $j\in W\setminus V(C)$ and some $i\in V(C)$. Let  $W'=V(C)\cup \{j\}$ and consider $\Delta_{W'}$.  Note that $\Delta_{W'}$, as a topological space, is homotopy equivalent either to $\mathcal{S}_1$ or to $\mathcal{S}_1$ together with an isolated point. The second case happens only if $\{j, i\}\notin E(\bar{G})$ for all $i\in V(C)$. In either case we see that $\widetilde{H}_{1}(\Delta_{W'}; K)\neq 0$. Now Hochster's formula implies  that $\beta_{t+1,t+4}(I)\neq 0$,   and so $\projdim(I)\geq t+1$, a contradiction. Thus the claim follows.
 Our claim implies that $(\bar{G})_W$ is connected, if $W\cap V(C)\neq \emptyset$.

\medskip
Now suppose that  $W\cap V(C)= \emptyset$. Then $|W\setminus V(C)|=|W|=t+2\geq 3$. Suppose that there exist $j, j'\in W$ such that $\{j, j'\}\notin E(\bar{G})$. Let  $W''=V(C)\cup\{ j, j'\}$. Since $C$ is a minimal cycle and since $j, j'$ are neighbors of all vertices of $C$ we have $\mathcal{F}(\Delta_{W''})=\{\{i, i+1, j\}, \{i, i+1, j'\}:\ 1\leq i\leq t+2\}\cup \{\{1, t+3, j\}, \{1, t+3, j'\}\}$. It follows that $\Delta_{W''}$, as a topological space, is homotopy equivalent to $\mathcal{S}_2$.
  Therefore $\widetilde{H}_2(\Delta_{W''}; K)\neq 0$ and so $\beta_{t+1, t+5}(I)\neq 0$, by Hochster's formula. This implies that $\projdim(I)\geq t+1$, a contradiction.
  So in this case $\{j, j'\}\in E(\bar{G})$ for all $j, j'\in W$. It follows that $(\bar{G})_W$ is a complete graph and so it is connected.
  This completes the proof.
\end{proof}

\begin{Proposition}
\label{beta}
Let $n\geq 4$,  and let $G$ be a simple graph on the vertex set $[n]$ with no isolated vertices,  and let $I$ be its edge ideal. Suppose that  $\index(I)=\projdim(I)=t$.   Then
\begin{itemize}
\item[(a)] $n=t+3$,
\item[(b)] $\beta_{t,t+3}(I)=1$.
\end{itemize}
\end{Proposition}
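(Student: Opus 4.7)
By \cite[Theorem 2.1]{EGHP}, the hypothesis $\index(I)=t$ supplies a minimal (induced) cycle $C$ in $\bar{G}$ of length $t+3$; set $V(C)=\{1,\ldots,t+3\}\subseteq [n]$. My plan is to first establish (a) by ruling out any vertex outside $V(C)$ via two Hochster-formula computations, and then obtain (b) immediately, since the only relevant subset of $[n]$ of size $t+3$ will be $[n]$ itself and the clique complex of $\bar{G}$ on it will turn out to be the cycle $C$.

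For (a), I assume for contradiction that there exists $j\in [n]\setminus V(C)$. First, exactly as in the proof of Lemma~\ref{index}, I argue that $j$ is adjacent in $\bar{G}$ to every vertex of $V(C)$: otherwise the complex $\Delta_{W'}$ on $W'=V(C)\cup\{j\}$ is (up to an isolated point) the cycle $C$, so $\widetilde{H}_1(\Delta_{W'};K)\neq 0$, and Hochster's formula yields $\beta_{t+1,t+4}(I)\neq 0$, contradicting $\projdim(I)=t$. Hence $j$ has no $G$-neighbor in $V(C)$, so by the no-isolated-vertex hypothesis $j$ admits some neighbor $j'\in [n]\setminus V(C)$ in $G$. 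Applying the same argument to $j'$ shows that both $j$ and $j'$ are $\bar{G}$-adjacent to every vertex of $V(C)$, while $\{j,j'\}\notin E(\bar{G})$.

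I then examine $\Delta_{W''}$ on $W''=V(C)\cup\{j,j'\}$. Because $\{j,j'\}$ is not an edge of $\bar{G}$, no $3$-face appears; the facets are exactly the triangles $\{i,i',j\}$ and $\{i,i',j'\}$ with $\{i,i'\}\in E(C)$. This is precisely the suspension of $C$, and is therefore homotopy equivalent to $\mathcal{S}_2$, so $\widetilde{H}_2(\Delta_{W''};K)\neq 0$ and Hochster forces $\beta_{t+1,t+5}(I)\neq 0$, a second contradiction. Therefore $n=t+3$, proving (a); moreover, since $C$ is induced on $V(C)=[n]$, one automatically obtains $\bar{G}=C$.

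For (b), since $n=t+3$, Hochster's formula collapses to
\[
\beta_{t,t+3}(I)=\dim_K\widetilde{H}_1(\Delta(\bar{G});K),
\]
the only contributing subset being $W=[n]$. Because $\bar{G}=C$ is a chordless cycle of length $t+3\geq 4$, the clique complex $\Delta(C)$ is just the $1$-dimensional simplicial complex $C$, which is homeomorphic to $\mathcal{S}_1$, so $\widetilde{H}_1\cong K$ and $\beta_{t,t+3}(I)=1$. I expect the main delicate point to be the homotopy identification $\Delta_{W''}\simeq \mathcal{S}_2$: once the facet structure above is established, it reduces to recognizing the suspension of $C\simeq \mathcal{S}_1$, but one must check carefully that no higher-dimensional face sneaks in, which is exactly what the non-edge $\{j,j'\}$ prevents.
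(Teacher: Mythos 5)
Your proof is correct and follows essentially the same route as the paper: extract the minimal $(t+3)$-cycle $C$ from \cite[Theorem 2.1]{EGHP}, rule out extra vertices by Hochster computations on $V(C)\cup\{j\}$ and on $V(C)\cup\{j,j'\}$ (the latter recognized as the suspension of $C$, hence $\mathcal{S}_2$, giving $\beta_{t+1,t+5}(I)\neq 0$), and then collapse Hochster's formula to the single subset $[n]$ for (b). The only divergence is minor: where you reuse the $\widetilde{H}_1(\Delta_{V(C)\cup\{j\}};K)\neq 0$ argument (which forces $\beta_{t+1,t+4}(I)\neq 0$) to show each outside vertex is $\bar{G}$-adjacent to all of $V(C)$, the paper instead exhibits a disconnected induced subgraph of $\bar{G}$ on $t+2$ vertices and appeals to the conclusion $\beta_{t,t+2}(I)=0$ of Lemma~\ref{index} --- both contradict $\index(I)=\projdim(I)=t$ equally well.
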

\begin{proof}
(a) Let $\Delta=\Delta(\bar{G})$. By Lemma \ref{index}, $\beta_{t, t+2}(I)=0$, and so  as a consequence of Hochster's formula,
$\Delta_W$ is connected for any $W\subset [n]$ with $|W|=t+2$.
Since $\index(I)=t$, \cite[Theorem 2.1]{EGHP} implies that there exists a minimal cycle of length $t+3$ in $\bar{G}$, say $C$. We may assume that $V(C)=\{1, 2, \ldots, t+3\}$  and  $E(C)=\{\{i, i+1\} \:\ 1\leq i\leq t+2\}\cup \{\{1, t+3\}\}$.

Assume that $n> t+3$.  We will show that under this assumption,  there exists $W\subset [n]$ such that  either $|W|=t+2$ and $(\bar{G})_W$ is disconnected which implies that $\Delta_W$ is disconnected, or $|W|=t+5$ and $\widetilde{H}_2(\Delta_W; K)\neq 0$ which implies that $\beta_{t+1, t+5}(I)\neq 0$, and so in this case $\projdim(I)>t$. Both cases are not possible, and hence it will follow that $n=t+3$.

For the construction of such $W$ we consider two cases. Let $j\in [n]\setminus [t+3]$.

Suppose first  that there exists $1\leq i\leq t+3$ such that $\{j, i\}\notin E(\bar{G})$.  Let $W=\{j\}\cup V(C) \setminus \{r, s\}$, where $r$ and $s$ are neighbors of $i$ in $C$. So $|W|=t+2$ and  $(\bar{G})_W$ is not connected.

Suppose now that  $\{j, i\}\in E(\bar{G})$ for all $1\leq i\leq t+3$. Assume that  either $[n]\setminus V(C)=\{j\}$ or for all   $j' \in [n]\setminus V(C)$ we have $\{j, j'\}\in E(\bar{G})$. Then  $j$ is an isolated vertex of $G$, a contradiction, since by assumption $G$ has no isolated vertices. So there exists $j'\in [n]\setminus V(C)$ such that $\{j, j'\} \notin E(\bar{G})$.

We may assume that $\{j', i\}\in E(\bar{G})$ for all $1\leq i\leq t+3$, because otherwise, as we have seen before for $j$, there exists $W\subset [n]$ with $|W|=t+2$ such that $(\bar{G})_W$ is not connected.   Now let $W=V(C)\cup\{ j, j'\}$. As we mentioned in the proof of Lemma~\ref{index}, $\widetilde{H}_2(\Delta_{W}; K)\neq 0$ and so $\beta_{t+1, t+5}(I)\neq 0$.

 \medskip
(b) Since $\index(I)=t$,  \cite[Theorem 2.1]{EGHP} implies that there exists a minimal cycle of length $t+3$ in $\bar{G}$, say $C$. Let   $\Delta=\Delta(\bar{G})$.  Then $\widetilde{H}_1(\Delta_{V(C)}; K)\neq 0$.  Hochster's formula implies that $\beta_{t,t+3}(I)\geq1$. Since $n=t+3$, the only $W\subseteq [n]$ with $|W|=t+3$ is $V(C)$, and so $\beta_{t,t+3}(I)=1$, again by Hochster's formula.
\end{proof}

Now we are ready to prove the main theorem of this section.

\begin{proof}[Proof of Theorem~\ref{main2}]
The implication (a)\implies (b) and also $\projdim(I)=n-3$ follows from  \cite[Example 2.2]{EGHP}.

(b)\implies (a):  Let $\index (I)=t$. By  \cite[Theorem 2.1]{EGHP}, $\bar{G}$ contains a minimal cycle of length $t+3$. Proposition~\ref{beta} implies that $G$ has $t+3$ vertices and so does $\bar{G}$. Hence in $\bar{G}$ there are no other vertices. Therefore $\bar{G}$ is a minimal cycle of length $t+3$.
Moreover, $\projdim (I)=n-3$.
\end{proof}

The following result  supports  our conjecture that for a monomial ideal  $I$ generated in degree $2$ one has $\index(I^{k+1})>\index(I^k)$ if $\index(I)>1$.

\begin{Corollary}
\label{maxlinear}
Let $I$ be the edge ideal of a simple graph $G$ and suppose that $I$ has maximal finite index $>1$. Then $\index(I^k)=\infty$ for all $k\geq 2$, i.e. $I^k$ has linear resolution for all $k\geq 2$.
\end{Corollary}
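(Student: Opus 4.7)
My plan is to combine Theorem~\ref{main2} with a theorem of Banerjee \cite{B} that forces all powers $I^k$, $k\geq 2$, to have a linear resolution whenever the underlying graph is gap-free and cricket-free. First I would invoke Theorem~\ref{main2}: the hypothesis that $I$ has maximal finite index $t=\index(I)=\projdim(I)>1$ identifies $\bar{G}$ with a cycle $C_n$ and gives $n-3=t>1$, so $n\geq 5$. Hence the problem reduces to showing that for $G=\bar{C_n}$ with $n\geq 5$, the edge ideal $I=I(G)$ has $\index(I^k)=\infty$ for every $k\geq 2$.

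The next step is to check that $G=\bar{C_n}$ is both gap-free and claw-free. For gap-freeness, a gap in $G$ produces four distinct vertices $a,b,c,d$ with $\{a,b\},\{c,d\}\in E(G)$ and $\{a,c\},\{a,d\},\{b,c\},\{b,d\}\in E(\bar{G})=E(C_n)$; since every vertex of $C_n$ has degree two, this forces $a$ and $b$ to share the same neighbor set $\{c,d\}$ in $C_n$, which only happens when $n=4$, contradicting $n\geq 5$. For claw-freeness, a claw at a vertex $v$ of $G$ would exhibit three vertices pairwise non-adjacent in $G$, i.e., a triangle in $C_n$, which is impossible for $n\geq 4$.

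Finally, since claw-free implies cricket-free (as noted in the introduction), Banerjee's theorem \cite{B} applies and gives $\index(I^k)=\infty$ for all $k\geq 2$. I do not anticipate a genuine obstacle: the whole argument is a short reduction, with Theorem~\ref{main2} pinning down the graph completely and the combinatorial checks on $\bar{C_n}$ being immediate. If anything, the delicate point is only the boundary case $n=4$, which is excluded precisely because the hypothesis $\index(I)>1$ forces $n-3\geq 2$.
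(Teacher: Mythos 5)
Your argument is correct and takes essentially the same route as the paper's own proof: both reduce via Theorem~\ref{main2} to $G=\bar{C_n}$ with $n\geq 5$, verify that the complement of a long cycle is gap-free and claw-free, and conclude with Banerjee's theorem since claw-free implies cricket-free. The only cosmetic difference is that the paper first records that one may assume $G$ has no isolated vertices before invoking Theorem~\ref{main2}.
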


\begin{proof}
We may assume that $G$ has no isolated vertices. By Theorem~\ref{main2} we know that $G$ is  the complement of an $n$-cycle with  $n\geq 5$, in particular $G$ is gap free. We claim that $G$ is claw free. Then by a theorem of Banerjee \cite[Theorem 6.17]{B}, the assertion follows. In order to prove the claim, let $\{i,i+1\}$ for $i=1,\ldots,n-1$ and $\{1,n\}$ be the edges of the cycle $\bar{G}$. Suppose $G$ admits a claw. Then by symmetry we may assume that $\{1,i\}, \{1,j\}$ and $\{1,k\}$ with $1<i<j<k$ are the edges of the claw. However, $\{i,k\}\in E(\bar{G})$, a contradiction.
\end{proof}

\section{Squarefree powers}

Let $I\subset S$ be a squarefree  monomial ideal. Then the {\em $k$-th squarefree power} of $I$, denoted by $I^{[k]}$, is the  monomial ideal generated by all squarefree monomials in $G(I^k)$.

Let $J$ be an arbitrary monomial ideal and let $\alpha=(a_1,a_2,\ldots,a_n)$ be an integer vector with $a_i\geq 0$. Then we let $J_{\leq \alpha}$ be the monomial ideal generated by all monomials $x_1^{c_1}\cdots x_n^{c_n}\in G(J)$ with $c_i\leq a_i$ for $i=1, \ldots, n$.

Now let $\alpha=(1,1,\ldots,1)$. Then $(I^k)_{\leq \alpha}=I^{[k]}$. Therefore it follows from   \cite[Lemma 4.4]{HHZ1}  that $\beta_{i,j}(I^{[k]})\leq \beta_{i,j}(I^k)$ for all $k$. This together with Theorem~\ref{main} implies:
\begin{enumerate}
\item[(i)] $\index(I^{[k]})\geq \index(I^k)$ for all $k$;
\item[(ii)] if $G$ is gap free and $I=I(G)$, then $\index(I^{[k]})>1$ for all $k$.
\end{enumerate}

Here we use the convention that the index of the zero ideal is infinity.

\medskip
The inequality (i) need not be strict. Indeed, if  $I$ is the monomial ideal given by Nevo and Peeva in \cite[Counterexample 1.10]{NP}, then it can be seen, using  computer program, that
$\index(I^k)=\index(I^{[k]})$ for $k=1,\ldots,4$.  On the other hand,  if $G$ is a $9$-cycle, then
$\index(I)=1$, $\index(I^{[2]})=1$, $\index(I^{[3]})=2$ and $\index(I^{[k]})=\infty$ for $k> 3$, while by Theorem~\ref{main}, $\index(I^k)=1$ for all $k$.

\medskip
The converse of (ii) is not true, that is, $G$ may not be gap free but $\index(I^{[k]})>1$ for some $k$. Of course, $\index(I^{[k]})>1$ for $k>n/2$, since for such powers $I^{[k]}=0$. But even if $I^{[k]}\neq 0$ and $G$ is not gap free we may have $\index(I^{[k]})>1$. For example, if $G$ is the graph with vertex set $[4]$ and edges  $\{1,2\},\{3,4\}$, then $G$ is not gap free, but $\index(I(G)^{[2]})=\infty$, because in this case $I(G)^{[2]}=(x_1x_2x_3x_4)$. This and many other examples lead us the Conjecture~\ref{squarefree} below.

\medskip
In the following we assume $G$ admits no isolated vertices.  Recall that a set of edges of $G$ without common vertices is called a {\em matching} of $G$. The {\em matching number} of $G$, denoted $\nu(G)$,  is the maximal size of a matching of $G$. Let $I$ be the edge ideal of $G$. Note that the generators of $I^{[k]}$ correspond bijectively to the set of matchings of $G$ of size $k$.

A matching with the property that one edge in this matching forms a gap with any other edge of this matching will be called a {\em restricted matching}. We denote by $\nu_0(G)$ the maximal size of a restricted matching of $G$. If there is no restricted matching  we set $\nu_0(G)=1$.
Obviously we have
\[
\nu_0(G)\leq \nu(G)=\max\{k\: I^{[k]}\neq 0\}.
\]
For example if $G$ is the whisker graph of a $5$-cycle, then $\nu(G)=5$ and $\nu_0(G)=3$.
\begin{center}
\begin{tikzpicture}[line cap=round,line join=round,>=triangle 45,x=0.6cm,y=0.6cm]
\clip(-1.18,-2.44) rectangle (17.32,4.4);
\draw (12.3,-0.38)-- (14.3,-0.38);
\draw (12.3,-0.38)-- (11.3,1.62);
\draw (11.3,1.62)-- (13.32,3.08);
\draw (13.32,3.08)-- (15.3,1.62);
\draw (15.3,1.62)-- (14.3,-0.38);
\draw [line width=1.6pt] (12.3,-0.38)-- (11.3,-1.38);
\draw (11.3,1.62)-- (10.02,1.62);
\draw [line width=1.6pt] (13.32,3.08)-- (13.32,4.3);
\draw (16.6,1.62)-- (15.3,1.62);
\draw [line width=1.6pt] (15.3,-1.38)-- (14.3,-0.38);
\draw (1.88,-0.34)-- (3.88,-0.34);
\draw (1.88,-0.34)-- (0.88,1.66);
\draw (0.88,1.66)-- (2.9,3.12);
\draw (2.9,3.12)-- (4.88,1.66);
\draw (4.88,1.66)-- (3.88,-0.34);
\draw [line width=1.6pt] (1.88,-0.34)-- (0.88,-1.34);
\draw [line width=1.6pt] (0.88,1.66)-- (-0.4,1.66);
\draw [line width=1.6pt] (2.9,3.12)-- (2.9,4.34);
\draw [line width=1.6pt] (6.18,1.66)-- (4.88,1.66);
\draw [line width=1.6pt] (4.88,-1.34)-- (3.88,-0.34);
\draw (-0.5,-1.72) node[anchor=north west] {\tiny A  matching of maximal size};
\draw (8.7,-1.8) node[anchor=north west] {\tiny A  matching of maximal size with  gaps};
\begin{scriptsize}
\fill [color=black] (12.3,-0.38) circle (1.5pt);
\fill [color=black] (14.3,-0.38) circle (1.5pt);
\fill [color=black] (11.3,1.62) circle (1.5pt);
\fill [color=black] (13.32,3.08) circle (1.5pt);
\fill [color=black] (15.3,1.62) circle (1.5pt);
\fill [color=black] (11.3,-1.38) circle (1.5pt);
\fill [color=black] (11.3,1.62) circle (1.5pt);
\fill [color=black] (10.02,1.62) circle (1.5pt);
\fill [color=black] (13.32,3.08) circle (1.5pt);
\fill [color=black] (13.32,4.3) circle (1.5pt);
\fill [color=black] (16.6,1.62) circle (1.5pt);
\fill [color=black] (15.3,1.62) circle (1.5pt);
\fill [color=black] (15.3,-1.38) circle (1.5pt);
\fill [color=black] (14.3,-0.38) circle (1.5pt);
\fill [color=black] (1.88,-0.34) circle (1.5pt);
\fill [color=black] (3.88,-0.34) circle (1.5pt);
\fill [color=black] (0.88,1.66) circle (1.5pt);
\fill [color=black] (2.9,3.12) circle (1.5pt);
\fill [color=black] (4.88,1.66) circle (1.5pt);
\fill [color=black] (0.88,-1.34) circle (1.5pt);
\fill [color=black] (0.88,1.66) circle (1.5pt);
\fill [color=black] (-0.4,1.66) circle (1.5pt);
\fill [color=black] (2.9,3.12) circle (1.5pt);
\fill [color=black] (2.9,4.34) circle (1.5pt);
\fill [color=black] (6.18,1.66) circle (1.5pt);
\fill [color=black] (4.88,1.66) circle (1.5pt);
\fill [color=black] (4.88,-1.34) circle (1.5pt);
\fill [color=black] (3.88,-0.34) circle (1.5pt);
\end{scriptsize}
\end{tikzpicture}
\end{center}
\medskip
In general $\nu(G)-\nu_0(G)$ can be arbitrarily large. For example, let $K_n$ be the complete graph on $n$ vertices. Then for its whisker graph $W(K_n)$ we have $\nu(W(K_n))=n$ and $\nu_0(W(K_n))=1$.

On the other hand, let $G$ be an arbitrary tree. We claim that $\nu_0(G)\geq \nu(G)-1$. To see this, let $G$ be an arbitrary graph.  We introduce for each matching  $M$ of $G$ a graph $\Gamma_M(G)$ which we call the {\em matching graph} of $G$. The vertices of $\Gamma_M(G)$ are the elements of $M$. Let $e_1, e_2$ be two elements of $M$ (which are edges of $G$). Then $\{e_1, e_2\}$  is an edge of $\Gamma_M(G)$ if and only if there is another edge $e$ in $G$ such that $e\cap e_1\neq \emptyset$ and $e\cap e_2\neq \emptyset$.

Observe that if $G$ is a tree, then $\Gamma_M(G)$ is a tree. Indeed, suppose that $G$ is a tree and $M$ a matching of $G$. Assume that $\Gamma_M(G)$ contains a  cycle $C$ which we may assume to be minimal. Without loss of generality we may furthermore assume  that $V(C)=\{e_1, e_2, \ldots, e_t\}$ and $E(C)=\{\{e_i, e_{i+1}\}\:\ 1\leq i\leq t-1\}\cup \{\{e_1, e_t\}\}$. Therefore there exist $e'_1, e'_2, \ldots, e'_t \in E(G)$ such that $e'_i\cap e_i\neq \emptyset\neq e'_i\cap e_{i+1}$ for all $1\leq i\leq t-1$ and $e'_t\cap e_t\neq \emptyset\neq e'_t\cap e_{1}$. Assume that $e'_i\cap e_i=\{v_i\}$, $e'_i\cap e_{i+1}=\{w_i\}$ for all $1\leq i\leq t-1$, and $e'_t\cap e_t=\{v_t\}$ and  $e'_t\cap e_{1}=\{w_t\}$. Since $\{e_1, e_2, \ldots, e_t\}$ is a matching, it follows that  for all $i$ and $j$ with $i\neq j$  the edges $e_i$ and $e_{j}$ do not have common vertex.  Thus $\{v_i\}=e'_i\cap e_i\neq e'_j\cap e_{j}=\{v_j\}$ for all $1\leq i\leq t-1$, and $\{v_t\}=e'_t\cap e_t\neq e'_1\cap e_{1}=\{v_1\}$. Similarly $w_i\neq w_j$ for all $i, j$ with $i\neq j$. Suppose that $v_i=w_j$ for some $i, j$. Then $e_i\cap e_{j+1}\neq \emptyset$. This is only possible  if $j=i-1$. Therefore $v_i\neq w_j$ for all $i, j$ with $i-j>1$. Now consider the sequence of vertices $v_1, w_1, v_2, w_2, \ldots, v_t, w_t$ in $V(G)$. Clearly $v_i$ is connected to $w_i$ in $G$ by $e'_i$. Moreover $w_i$ is connected to $v_{i+1}$ in $G$ by $e_{i+1}$, and also $w_t$ is connected to $v_1$ by $e_1$. If $w_i=v_{i+1}$, then $w_i$ is connected to $w_{i+1}$ by $e'_{i+1}$. By removing all $v_{i+1}$ from the above sequence whenever $w_i=v_{i+1}$, we obtain a cycle in $G$, a contradiction.

\medspace
Now suppose that $G$ is a tree and $M$ is a maximal matching of $G$. So $|M|=\nu(G)$.   If $\Gamma_M(G)$ contains an isolated vertex $e$, then $M$ is a restricted matching and hence in this case $\nu_0(G)=\nu(G)$. Suppose that there exists no maximal matching $M$ with the property that $\Gamma_M(G)$ admits an isolated vertex. Since  $\Gamma_M(G)$ is a tree, as we have seen before,   there exists a vertex $e$ in $\Gamma_M(G)$ of degree one. Suppose that $\{e, e'\}\in E(\Gamma_M(G))$. Then $e$ is an isolated vertex in the induced subgraph of $\Gamma_M(G)$ on the vertex set $V(\Gamma_M(G))\setminus \{e'\}$. Hence   $M\setminus \{e'\}$ is a  maximal restricted matching of $G$, and so $\nu_0(G)=\nu(G)-1$.

\medskip
In contrast to the ordinary powers of edge ideals there exists  for any edge ideal $I$ a nonzero squarefree power  of $I$ with linear resolution, as follows from the next result.
\begin{Theorem}
\label{lin.quo}
Let $G$ be a simple graph on the vertex set $[n]$ and $I$ its edge ideal. Then $I^{[\nu(G)]}$ has linear quotients.
\end{Theorem}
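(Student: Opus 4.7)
The plan is to translate linear quotients for $I^{[\nu(G)]}$ into a basis-exchange property for the \emph{matching matroid} of $G$. The minimal monomial generators of $I^{[\nu(G)]}$ are the squarefree monomials $u_M = \prod_{v \in V(M)} x_v$ as $M$ ranges over the maximum matchings of $G$, and two maximum matchings contribute the same generator exactly when $V(M) = V(M')$. Hence $G(I^{[\nu(G)]})$ is parametrized by the family $\mathcal{B} = \{V(M) : M \text{ is a maximum matching of } G\}$, which, by a classical theorem of Edmonds, is the collection of bases of a matroid on $V(G)$ (the matching matroid).

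Fixing the natural order on $V(G)=[n]$, I would order the generators by declaring $u_M$ to precede $u_{M'}$ whenever $\min(V(M)\triangle V(M'))$ lies in $V(M)$. To verify the linear quotient property with respect to this ordering, take any maximum matching $M$ together with any strictly earlier maximum matching $M'$, and set $y = \min(V(M')\triangle V(M))$. By the choice of ordering, $y \in V(M')\setminus V(M)$ and $y$ is strictly smaller than every element of $V(M)\setminus V(M')$. Applying the basis-exchange axiom of the matching matroid to $V(M)$, $V(M')$ and $y$, one obtains $x \in V(M)\setminus V(M')$ such that $(V(M)\setminus\{x\}) \cup \{y\} = V(M'')$ for some maximum matching $M''$. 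Since $y < x$, the matching $M''$ precedes $M$ in our ordering, and $u_{M''}/\gcd(u_{M''}, u_M) = x_y$ divides $u_{M'}/\gcd(u_{M'}, u_M) = \prod_{v \in V(M')\setminus V(M)} x_v$. This exhibits a variable in the colon of prior generators that witnesses the contribution of $M'$, proving that each such colon ideal is generated by linear forms.

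The only nontrivial input is the basis-exchange property for vertex sets of maximum matchings, and this is the point I expect to be the main obstacle. I would either cite it from matroid theory, or prove it directly via augmenting paths: the connected component of $y$ in the graph $M\triangle M'$ is an alternating path starting with an $M'$-edge at $y$; its other endpoint must lie in $V(M)\setminus V(M')$, for otherwise the path would be $M$-augmenting, contradicting the maximality of $M$. Flipping $M$ along this alternating path then produces a maximum matching $M''$ with $V(M'') = (V(M)\setminus\{x\})\cup\{y\}$, as required. Once this matching-theoretic core is in hand, the remainder of the argument is a formal manipulation of the lex ordering.
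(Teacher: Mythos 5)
Your proof is correct, and it takes a genuinely different route from the paper's. The paper also orders the generators lexicographically and reduces to showing that every colon generator $u_l/\gcd(u_l,u_i)$ of degree at least two is divisible by a degree-one colon generator, but it does this by induction on $\deg(u_l/\gcd(u_l,u_i))$, with an inductive step consisting of a single-edge swap: for a variable $x_r$ dividing $u_l/\gcd(u_l,u_i)$ it takes the edge $\{r,r'\}$ of the matching behind $u_l$, uses maximality of the matching to see that $r'$ must be covered by an edge $\{r',r''\}$ of the matching behind $u_i$, and exchanges one of these edges for the other, with a case distinction on whether $r<r''$ or $r>r''$ to ensure the resulting generator precedes $u_i$. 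You instead compress the combinatorial content into the statement that the vertex sets of maximum matchings form the bases of the Edmonds--Fulkerson matching matroid, and then run the standard argument that a basis-generated squarefree ideal has linear quotients in the lexicographic order: take $y=\min\bigl(V(M')\triangle V(M)\bigr)$ and apply basis exchange. Your alternating-path verification of the exchange axiom is sound; the only point you pass over is that the far endpoint of the path also cannot lie in $V(M)\cap V(M')$, which follows at once from a degree count in $M\triangle M'$. What your approach buys is conceptual clarity and generality --- the theorem appears as an instance of the fact that matroidal ideals have linear quotients, and the exchange may flip an entire alternating path at once; what the paper's approach buys is self-containedness, staying entirely at the level of the monomial generators and single-edge exchanges without invoking matroid theory.
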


\begin{proof}
Let $u_1>u_2>\cdots>u_t$ be the generators of $I^{[\nu(G)]}$ ordered lexicographically induced by $x_1>x_2>\cdots>x_n$ and let $u_j=u^{(j)}_{1}u^{(j)}_{2}\cdots u^{(j)}_{\nu(G)}$ for all $1\leq j\leq t$, where $u^{(j)}_{k}=x_{a}x_{b}$ is a monomial corresponding to an edge $\{a,b\}$ of $G$. Note that each generator $u_j$ corresponds to a maximal matching $m(u_j)$ of $G$ which consists of $\nu(G)$ distinct edges of $G$. Hence, for all $1\leq j\leq t$ and all $1\leq k<k'\leq \nu(G)$, $\gcd(u^{(j)}_{k}, u^{(j)}_{k'})=1$.  
We will show that for all $2\leq i\leq t$, the colon ideal $(u_1, u_2, \ldots, u_{i-1})\: u_i$ is generated by variables. Set $J_i=(u_1, u_2, \ldots, u_{i-1})$. Note that $\{u_k/ \gcd(u_l, u_i) \: 1\leq l\leq i-1\}$ is a set of generators of $J_i\:u_i$, see for example \cite[Propositon 1.2.2]{HHBook}.  Let $l<i$. 
Assume that  $1\leq l\leq i-1$ and $x_{r}x_{s}$ divides $u_l/ \gcd(u_l, u_i)$. If $\{r, s\}\in E(G)$, then $m(u_i)\cup \{\{r, s\}\}$ is a matching of $G$ of size $\nu(G)+1$, a contradiction to the fact that $m(u_i)$ is a maximal matching. Hence  no  pair of variables which divide $u_l/ \gcd(u_l, u_i)$  corresponds to an edge of $G$. 

Suppose $m:=\deg(u_l/ \gcd(u_l, u_i))>1$. We prove that there exists $l'<i$ such that $u_{l'}/\gcd(u_{l'},u_i)$ is of degree one and it divides $u_{l}/\gcd(u_{l},u_i)$. 
 Suppose $u_l/ \gcd(u_l, u_i)=x_{r_1}x_{s_1}x_{s_2}\cdots x_{s_{m-1}}$  and  $x_{r_1}>x_{s_k}$ for all $1\leq k\leq m-1$. Since $\deg u_l=\deg u_i$, it follows that $\deg (u_i/ \gcd(u_i, u_l))=m$. Let  $u_i/ \gcd(u_i, u_l)=x_{a_1}x_{a_2}\cdots x_{a_{m}}$.  Since $u_l>_{lex}u_i$ we have $x_{r_1}>x_{a_k}$ for all $1\leq k\leq m$. As $x_{r_1}$ divides $u_l$, we have  $u^{(l)}_{k_1}=x_{r_1}x_{r_2}$ for some $1\leq r_2\leq n$ and some $1\leq k_1\leq \nu(G)$. Since $\{r_1,r_2\}\in E(G)$ we have $r_2\notin \{s_1,\ldots,s_{m-1}\}$ for the above-mentioned reason. Therefore $x_{r_2}$ divides $u_i$. It follows that there exist ${k_2}$ and $r_3$ with $u^{(i)}_{k_2}=x_{r_2}x_{r_3}$. If $x_{r_1}>x_{r_3}$, then set $u_{l'}:=x_{r_1}u_i/x_{r_3}$. Since $x_{r_1}$ does not divide $u_i$, the monomial $u_{l'}$ corresponds to a matching $m(u_{l'})$ of $G$ with $m(u_{l'})=(m(u_i))\setminus\{\{r_2,r_3\}\})\cup \{\{r_1,r_2\}\}$. Therefore $u_{l'}\in \mathcal{G}(I^{[\nu(G)]})$. Since $x_{r_1}>x_{r_3}$ we have $u_{l'}>_{lex}u_i$ and hence $u_{l'}\in \mathcal{G}(J_i)$. Now $u_{l'}/\gcd(u_{l'},u_i)=x_{r_1}$ and $x_{r_1}|u_l/\gcd(u_l,u_i)$ and hence we are done. 


Now suppose $x_{r_1}<x_{r_3}$. Since $x_{a_k}<x_{r_1}<x_{r_3}$ for all $k$, we conclude that $x_{r_3}|u_l$. Therefore $u^{(l)}_{k_3}=x_{r_3}x_{r_4}$ for some $k_3, r_4$. If $r_4=r_1$, then $\{r_3,r_4\}, \{r_1,r_2\}\in m(u_l)$ implies that $r_3=r_2$ which contradicts  the fact that $\{r_2,r_3\}\in E(G)$. Thus $r_4\neq r_1$ and in particular $k_3\neq k_1$.  
 If $r_4\notin\{s_1,\ldots,s_{m-1}\}$, then $x_{r_4}$ divides $u_i$. In this case $u^{(i)}_{k_4}=x_{r_4}x_{r_5}$ for some $k_4, r_5$. If $k_4=k_2$ then  $\{r_3,r_4\}\in E(G)$ implies that  $r_4=r_2$, and hence $\gcd(u^{(l)}_{k_1}, u^{(l)}_{k_3})\neq 1$, a contradiction.   Thus $k_4\neq k_2$. 
 If $r_5\notin \{a_1,\ldots,a_{m}\}$, then $x_{r_5}|u_l$ and so $u^{(l)}_{k_5}=x_{r_5}x_{r_6}$ for some $k_5, r_6$. If $r_6=r_1$, as above, we conclude that $\gcd(u^{(i)}_{k_2}, u^{(i)}_{k_4})\neq 1$ which is a contradiction. Thus $r_6\neq r_1$ and in particular $k_5\neq k_1$. If $k_5=k_3$, then $\{r_4,r_5\}\in E(G)$ implies that $r_5=r_3$ and hence $\gcd(u^{(i)}_{k_2}, u^{(i)}_{k_4})\neq 1$, a contradiction. Thus $k_5\neq k_1, k_3$.   
 If $r_6\notin\{s_1,\ldots,s_{m-1}\}$ we have $u^{(i)}_{k_6}=x_{r_6}x_{r_7}$ for some $k_6, r_7$. If $k_6=k_2$, then $r_6\in \{r_2,r_3\}$ implies that either $\gcd(u^{(l)}_{k_5}, u^{(l)}_{k_1})\neq 1$ or $\gcd(u^{(l)}_{k_5}, u^{(l)}_{k_3})\neq 1$, a contradiction. Similarly, if $k_6=k_4$, then $r_4=r_6$ which implies $\gcd(u^{(l)}_{k_5}, u^{(l)}_{k_3})\neq 1$ which is again a contradiction. Thus $k_6\neq k_2,k_4$. If $r_7\notin \{a_1,\ldots,a_{m}\}$, we have $u^{(l)}_{k_7}=x_{r_7}x_{r_8}$ for some $k_7, r_8$. This  process is continued if  we have either $r_{2j}\notin\{s_1,\ldots,s_{m-1}\}$ or $r_{2j+1}\notin \{a_1,\ldots,a_{m}\}$. But since $\nu(G)$ is finite, this process must terminate after some finite steps. This means that in some step, say $j\geq 2$, either $r_{2j}\in\{s_1,\ldots,s_{m-1}\}$ or $r_{2j+1}\in \{a_1,\ldots,a_{m}\}$. 

Suppose first that $r_{2j}=s_k$ for some  $1\leq k\leq m-1$.  Now 
$$(m(u_i)\setminus \{\{r_2,r_3\}, \{r_4,r_5\},\ldots,\{r_{2j-2},r_{2j-1}\}\})\cup \{\{r_1,r_2\}, \{r_3,r_4\}, \ldots,\{r_{2j-1},s_k\}\}$$
 is a matching of $G$ of size $\nu(G)+1$. This contradicts the assumption that $\nu(G)$ is the size of a maximal matching in $G$. Therefore $r_{2j}\notin\{s_1,\ldots,s_{m-1}\}$ and hence $r_{2j+1}\in \{a_1,\ldots,a_{m}\}$ for some $j\geq 2$.  
Set $u_{l'}:=x_{r_1}u_i/x_{2j+1}$. Then $u_{l'}$ corresponds to the matching $$(m(u_i)\setminus\{\{r_2,r_3\},\{r_4,r_5\},\ldots,\{r_{2j},r_{2j+1}\}\})\cup \{\{r_1,r_2\},\{r_3,r_4\},\ldots,\{r_{2j-1},r_{2j}\}\}.$$
 Since the size of the above matching is $\nu(G)$ we have $u_{l'}\in \mathcal{G}(I^{[\nu(G)]})$ and since $x_{r_1}>x_{a_k}$ for all $k$, we have $u_{l'}>_{lex}u_i$. Thus  $u_{l'}\in \mathcal{G}(J_i)$ with $u_{l'}/\gcd(u_{l'},u_i)=x_{r_1}$ and $x_{r_1}|u_l/\gcd(u_l,u_i)$. This completes the proof. 
\end{proof}

\medskip
Let $I$ be the edge ideal of a simple graph $G$. Because of Theorem \ref{lin.quo}, $\index(I^{[\nu(G)]})>1$. The question arises which is the smallest integer $k_0$ such that $\index(I^{[k]})>1$ for all $k\geq k_0$. A partial answer to this question is given by the next lemma which implies that $k_0\geq \nu_0(G)$.

\begin{Lemma}
\label{levico}
Let $G$ be a simple graph and $I$ its edge ideal.  Then $\index(I^{[k]})=1$ if  $0<k<\nu_0(G)$.
\end{Lemma}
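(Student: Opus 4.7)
The plan is to exploit Corollary~\ref{connected}: it suffices to exhibit two generators $u, v \in G(I^{[k]})$ that lie in different connected components of $G^{(u,v)}_{I^{[k]}}$. The construction of these witnesses will use a restricted matching of size $k+1$, which is guaranteed by the hypothesis $k < \nu_0(G)$.

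So let $M = \{e, e_1, \ldots, e_k\}$ be a restricted matching of size $k+1$ in which the edge $e$ forms a gap with each $e_s$. Writing $x^f := x_i x_j$ for an edge $f = \{i,j\}$, I would set
\[
u = x^e \prod_{s=1}^{k-1} x^{e_s}, \qquad v = \prod_{s=1}^{k} x^{e_s}.
\]
Because $M$ is a matching, the edges involved are pairwise vertex-disjoint, so $u$ and $v$ are distinct squarefree monomials of degree $2k$ in $G(I^{[k]})$. A direct computation gives $\lcm(u,v) = x^e \prod_{s=1}^{k} x^{e_s}$, of degree $2(k+1) > 2k+1$, so the connectedness question is non-trivial.

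The crucial structural observation is that, since $e$ forms a gap with every $e_s$, \emph{the only edges of $G$ whose vertex set is contained in $V(e) \cup V(e_1) \cup \cdots \cup V(e_k)$ are $e$ itself and edges contained in $V(e_1) \cup \cdots \cup V(e_k)$}. Consequently, any $w \in V(G^{(u,v)}_{I^{[k]}})$—a size-$k$ matching supported on the vertices of $\lcm(u,v)$—either uses $e$, in which case $x^e \mid w$, or avoids every vertex of $V(e)$, in which case $\gcd(w, x^e) = 1$. This partitions the vertex set of $G^{(u,v)}_{I^{[k]}}$ into two disjoint classes, with $u$ in the first and $v$ in the second.

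It only remains to rule out edges of $G^{(u,v)}_{I^{[k]}}$ between these two classes: if $x^e \mid w_1$ and $\gcd(w_2, x^e) = 1$, then $\lcm(w_1, w_2)$ is divisible by $x^e \cdot w_2$, which already has degree $2k+2$, so $\{w_1, w_2\}$ is not an edge. Thus $u$ and $v$ lie in distinct components of $G^{(u,v)}_{I^{[k]}}$, and Corollary~\ref{connected} yields $\index(I^{[k]}) = 1$. I anticipate no serious obstacle once the restricted matching is fixed; the argument is essentially a combinatorial unpacking of the gap condition.
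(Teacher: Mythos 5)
Your proof is correct and follows essentially the same route as the paper: both take the two size-$k$ sub-matchings $u$ (containing the distinguished gap edge $e$) and $v$ (omitting it) of a restricted matching of size $k+1$, and use the gap condition to show that every vertex of $G^{(u,v)}_{I^{[k]}}$ is either divisible by $x^e$ or coprime to it, whence no edge of $G^{(u,v)}_{I^{[k]}}$ can join the two classes and Corollary~\ref{connected} applies. The only cosmetic difference is that the paper first observes that the coprime class reduces to $\{v\}$ and then derives the degree contradiction from a hypothetical neighbor of $v$, while you rule out all crossing edges directly.
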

\begin{proof}
Let $\{e_1,e_2,\ldots, e_{\nu_0(G)}\}$ be a restricted matching of $G$  such that the pairs $e_1,e_i$ form a gap of $G$ for $i=2,\ldots,\nu_0(G)$, and let $u_1,\ldots, u_{\nu_0(G)}\in G(I)$ be the corresponding monomials. Let $0<k<\nu_0(G)$, and $u=u_1u_2\cdots u_{k}$ and $v=u_2u_3\cdots u_{k+1}$. We claim  that $u$ and $v$ are  disconnected in $G^{(u,v)}_{I^{[k]}}$ which then by Corollary~\ref{connected}  yields the desired conclusion.

Let $w\in G^{(u,v)}_{I^{[k]}}$ and suppose that $u_1=x_rx_s$. Since $\lcm(u,v)=u_1u_2\cdots u_{k+1}$, the condition on the edges $e_i$ implies that if $x_r$ or $x_s$ divides $w$, then $u_1$ divides $w$. Thus either $w=v$ or $u_1$ divides $w$. Assume now that $u$ and $v$ are connected in  $G^{(u,v)}_{I^{[k]}}$. Then there exists $w\in G^{(u,v)}_{I^{[k]}}$ with $w \neq v$ and such that $\lcm(w,v)=2k+1$. However, $\lcm(w,v)=2k+2$ since $u_1$ divides $w$, a contradiction.
\end{proof}

We actually expect that $k_0=\nu_0(G)$. Thus we have the following

\begin{Conjecture}
\label{squarefree}
Let $G$ be a simple graph and $I$ its edge ideal. Then  $\index(I^{[k]})>1$ if and only if $k\geq \nu_0(G)$.
\end{Conjecture}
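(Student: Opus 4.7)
My plan is to establish the nontrivial direction of Conjecture~\ref{squarefree}---that $\index(I^{[k]}) > 1$ whenever $k \geq \nu_0(G)$---by reducing to a combinatorial statement about matching exchange. By Corollary~\ref{connected}, it suffices to show that for every pair $u, v \in G(I^{[k]})$ with $\deg(\lcm(u,v)) > 2k+1$, the graph $G^{(u,v)}_{I^{[k]}}$ contains a path from $u$ to $v$. Writing $u$ and $v$ as products of edge monomials of matchings $M_u, M_v$ of $G$ of size $k$, an adjacency $\{u, w\} \in E(G^{(u,v)}_{I^{[k]}})$ translates into an \emph{elementary edge swap}: $M_w$ is obtained from $M_u$ by replacing some edge $\{a, c\} \in M_u$ with another edge $\{b, c\} \in E(G)$ sharing the vertex $c$, subject to $b \in W := \supp(\lcm(u,v))$ and to $M_w$ remaining a matching of size $k$. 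The task is therefore: connect any two size-$k$ matchings lying inside $W$ by such elementary swaps.

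I would induct on a combinatorial distance between $M_u$ and $M_v$, say $\abs{V(M_u) \setminus V(M_v)}$, adapting the scaffolding of the proof of Theorem~\ref{main}. In the inductive step one wishes to produce an elementary swap on $M_u$ inside $W$ that strictly shrinks this distance; the induction hypothesis applied to the resulting pair $(w, v)$ then produces the remainder of the path, and concatenation with the edge $\{u,w\}$ finishes.

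The hypothesis $k \geq \nu_0(G)$ enters precisely at the swap-construction step. The obstruction identified in the proof of Lemma~\ref{levico} is that of a restricted matching: an edge $e \in M_u$ forming a gap with every other edge of $M_u$ admits no swap, because the only available common-neighbor replacement requires a bridging edge that the gap condition forbids. When $k > \nu_0(G)$, no size-$k$ matching is restricted, so every edge of $M_u$ shares a bridging edge with some other edge of the same matching, from which I expect the desired swap into $W$ can be constructed. The borderline case $k = \nu_0(G)$ is more delicate, as $M_u$ itself may be restricted; but then $W \supsetneq V(M_u)$ supplies extra room, and the vertices contributed by $M_v$ should, after a careful selection, provide a non-gap partner for the special edge of $M_u$.

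The principal obstacle, I anticipate, lies in simultaneously meeting three demands on a single swap---preserving the matching property, staying inside $W$, and making genuine progress toward $M_v$---and in handling the case $k = \nu_0(G)$ uniformly across all graphs. The fact that the authors settle the conjecture only for cycles in Theorem~\ref{cycle} suggests that a careful case analysis on the overlap pattern of $M_u$ and $M_v$ within $W$, invoking the non-restricted hypothesis in each case to locate the bridging edge, is the expected route. A more ambitious alternative would be to prove vanishing of the relevant multigraded Koszul homology of $I^{[k]}$ directly, via a shelling or discrete Morse argument on a suitable subcomplex of the $\lcm$-lattice, with the $\nu_0$-threshold encoding exactly when such a shelling exists; engineering this to be sensitive to $\nu_0(G)$ is, however, the core difficulty.
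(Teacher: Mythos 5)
The statement you are trying to prove is stated in the paper as a \emph{conjecture}, and the paper does not prove it: it establishes only the ``only if'' direction (Lemma~\ref{levico} shows $\index(I^{[k]})=1$ for $0<k<\nu_0(G)$) together with two special cases of the ``if'' direction, namely $k=\nu(G)$ (Theorem~\ref{lin.quo}) and arbitrary cycles (Theorem~\ref{cycle}). Your reduction via Corollary~\ref{connected} to connecting matchings by ``elementary swaps'' is the right framework --- it is exactly the mechanism behind both Lemma~\ref{levico} and the odd case of Theorem~\ref{cycle} --- but your proposal does not close the gap that makes this a conjecture. The crucial step, producing a swap that simultaneously stays inside $W=\supp(\lcm(u,v))$, preserves the matching property, and makes progress toward $M_v$, is asserted (``I expect the desired swap can be constructed'', ``should, after a careful selection, provide a non-gap partner'') rather than carried out, and the hypothesis you invoke does not deliver it.

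Concretely, for $k>\nu_0(G)$ non-restrictedness of $M_u$ gives, for each edge $e\in M_u$, a bridging edge $f\in E(G)$ meeting $e$ and some other $e'\in M_u$. But $f$ connects two edges of $M_u$ to each other: replacing $\{e,e'\}$ by $f$ drops the matching size to $k-1$, and any move confined to $V(M_u)$ does not change the support of the monomial, hence does not produce an edge of $G^{(u,v)}_{I^{[k]}}$ at all (adjacent generators must have $\lcm$ of degree $2k+1$, so their supports must differ). What you actually need is an edge of $G$ from a vertex of $M_u$ to a vertex of $V(M_v)\setminus V(M_u)$ whose insertion keeps a size-$k$ matching inside $W$; the condition $k\geq\nu_0(G)$ is a global invariant of $G$ and says nothing directly about the pair $(M_u,M_v)$, which is why the case analysis in Theorem~\ref{cycle} has to exploit the explicit description of the generators from Lemma~\ref{e-o cycle}. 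There is also a smaller inaccuracy: adjacency in $G^{(u,v)}_{I^{[k]}}$ only constrains the supports $V(M_u)$ and $V(M_w)$ to differ in one vertex; $M_w$ need not arise from $M_u$ by swapping a single edge, since a given squarefree monomial can come from several matchings. In short, your proposal is a plausible research plan that restates the difficulty, not a proof, and the statement remains open beyond the cases settled in the paper.
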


\medskip

In support of our conjecture we prove the following result.

\begin{Theorem}
\label{cycle}
 Let $C_n$ be a cycle of length $n>3$ and $I$ its edge ideal. Then the conjecture holds for $C_n$. More precisely we have
 \begin{itemize}
 \item[(a)] $\nu(C_n)=\lfloor n/2\rfloor$;
 \item[(b)]  $\nu_0(C_n)=\nu(C_n)-1$;
 \item[(c)] If $n$ is even, then the ideal $I^{[\nu_0(C_n)]}$ has linear quotients. If $n$ is odd, then $\index(I^{[\nu_0(C_n)]})=2$.
 \end{itemize}
\end{Theorem}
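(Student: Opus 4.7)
Part (a) is immediate: the matching $\{1,2\}, \{3,4\}, \ldots, \{2\lfloor n/2\rfloor-1, 2\lfloor n/2\rfloor\}$ realizes the lower bound, while any matching of $C_n$ uses at most $2\lfloor n/2\rfloor$ distinct vertices.

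For part (b), I would argue two inequalities. For $\nu_0(C_n) \leq \nu(C_n)-1$, I show that no maximum matching $M$ of $C_n$ is restricted: since $n-2\nu(C_n) \in \{0,1\}$, for every $e = \{u, u+1\} \in M$ at least one of the cyclic neighbors $u-1, u+2$ of $e$ is covered by some $e' \in M\setminus\{e\}$, and then the cycle edge joining them meets both $e$ and $e'$, so $e, e'$ fail to form a gap. For the reverse inequality I take $e_1 = \{1,2\}$: the edges of $C_n$ forming a gap with $e_1$ are exactly $\{i, i+1\}$ for $4 \leq i \leq n-2$, inducing a path on $\{4,\ldots,n-1\}$ whose maximum matching has size $\lfloor(n-4)/2\rfloor = \nu(C_n)-2$; adjoining $e_1$ yields a restricted matching of size $\nu(C_n)-1$.

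In the even case of part (c) ($n = 2m$), the generators of $I^{[m-1]}$ are in bijection with pairs $(a,b)$, $a<b$, $a+b$ odd, where such a pair labels the unique matching of $C_n$ whose uncovered vertex set is $\{a,b\}$, since both arcs of $C_n\setminus\{a,b\}$ then have even length and unique perfect matchings. Lexicographic order on generators corresponds to lexicographic order on missing pairs. Imitating the proof of Theorem~\ref{lin.quo}, for $l<i$ the quotient $u_l/\gcd(u_l,u_i) = \prod_{j \in \{a_i,b_i\}\setminus\{a_l,b_l\}} x_j$ has degree at most two, and only the disjoint case $\{a_l,b_l\} \cap \{a_i,b_i\} = \emptyset$ is non-trivial. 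In that case I exhibit a lex-earlier generator $u_{l'}$ whose missing pair shares exactly one vertex with $\{a_i,b_i\}$: the pair $(b_i, b_i+1)$ works whenever $b_i < 2m$, and the pair $(a_i+2, 2m)$ works when $b_i = 2m$ and $a_i \leq 2m-3$. The only exception is $(a_i,b_i) = (2m-1, 2m)$, which is the lex-largest generator and therefore produces no colon ideal to analyze.

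In the odd case of part (c) ($n = 2m+1$), I index generators by their uncovered triples $T \subset [n]$ of parity pattern (odd, even, odd) or (even, odd, even) in sorted order. For $\index(I^{[m-1]}) \geq 2$ I apply Corollary~\ref{connected}: two generators $u_T, u_{T'}$ are adjacent in $G^{(u_T,u_{T'})}_{I^{[m-1]}}$ iff $|T \cap T'| = 2$, and for arbitrary $T, T'$ I construct a path of valid triples, each containing $T \cap T'$ and differing consecutively in exactly one vertex, by a case analysis on $|T \cap T'| \in \{0, 1, 2\}$ exploiting the richness of the valid triples through the common pair. The main obstacle is the upper bound $\index(I^{[m-1]}) \leq 2$: because the only element of the lcm lattice of $I^{[m-1]}$ of degree $>2m$ is $M = x_1 \cdots x_n$, Hochster's formula together with Alexander duality reduces the problem to showing $\tilde H_1(\Delta^\vee; K) \neq 0$, where $\Delta^\vee$ is the two-dimensional simplicial complex on $[n]$ whose faces are the subsets of valid parity-alternating triples. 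For $m \leq 3$ an Euler-characteristic computation gives $\tilde h_1 - \tilde h_2 = 1$, forcing $\tilde h_1 \geq 1$; for $m \geq 4$ the reduced Euler characteristic turns positive and I must exhibit an explicit non-trivial $1$-cocycle on $\Delta^\vee$, for instance by suitably modifying the coboundary $\phi(\{i,j\}) = (i+j)\bmod 2$ so as to remain compatible with every valid triple while detecting the $n$-gon $\{1,2\} + \{2,3\} + \cdots + \{n,1\}$. This cohomological verification for all odd $n$ is the technically hardest step of the proof.
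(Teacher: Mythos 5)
Parts (a), (b) and the even case of (c) are essentially correct and follow the same route as the paper: you classify the generators of $I^{[\nu(C_n)-1]}$ by their uncovered pairs $(a,b)$ with $b-a$ odd (this is Lemma~\ref{e-o cycle}(a)) and, for each nontrivial colon quotient $x_{a_i}x_{b_i}$, exhibit a lex-earlier generator whose quotient is a single variable dividing it; your choices $(b_i,b_i+1)$ and $(a_i+2,2m)$ differ from the paper's auxiliary pairs but work just as well. Your two-sided argument for (b) is in fact slightly more complete than the paper's, which only exhibits the restricted matching of size $\nu-1$. The lower bound $\index(I^{[\nu_0(C_n)]})>1$ for odd $n$ via Corollary~\ref{connected} and paths of parity-alternating triples all containing $T\cap T'$ is also the paper's strategy, though you leave the case analysis on $|T\cap T'|$ to the reader where the paper carries it out in full; that is an incompleteness of exposition rather than of method.

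The genuine gap is the upper bound $\index(I^{[\nu_0(C_n)]})\le 2$ for odd $n$. Your reduction via Alexander duality to $\widetilde{H}^1(\Delta^\vee;K)\ne 0$, where $\Delta^\vee$ is generated by the parity-alternating triples, is correct, but you then concede that the Euler-characteristic argument settles only $m\le 3$ and that for $m\ge 4$ you ``must exhibit an explicit non-trivial $1$-cocycle'' which you do not construct; the proposed modification of $\phi(\{i,j\})=(i+j)\bmod 2$ is neither shown to be a cocycle on $\Delta^\vee$ nor shown to be a non-coboundary. Thus the key non-vanishing $\beta_{2,n}(I^{[(n-3)/2]})\ne 0$ remains unproved for all odd $n\ge 9$, and with it the claim $\index=2$. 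The paper closes exactly this point in Proposition~\ref{beta odd} without dualizing: by Lemma~\ref{odd cycle}, $I^{[(n-3)/2]}=I_\Delta$ with $\Delta$ pure of dimension $n-4$, so $\widetilde{H}_{n-4}(\Delta;K)=\Ker\partial_{n-4}$ (there is nothing to quotient by), and one only needs a single nonzero top cycle; the paper writes down $\tau=\sum_{F\in\mathcal{F}(\Delta)}(-1)^{\sigma(F)}b_F$ with $\sigma(F)=\sum_{i\in F}i$ and verifies $\partial_{n-4}\tau=0$ by the parity bookkeeping of Lemma~\ref{evenodd}, which shows that each codimension-one face receives either four or two contributions cancelling in pairs. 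Some explicit computation of this kind is indispensable here; until you supply it (or the dual cocycle), the odd case of (c) is not established.
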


\medskip
To prove this  theorem  we need  some preliminary steps.

\begin{Lemma}
\label{e-o cycle}
Let $C_n$ be a cycle of  length $n>3$ and $I$ its edge ideal.
\begin{itemize}
\item[(a)] If $n$ is even, then
$$\displaystyle G(I^{[\frac{n}{2}-1]})=\left\{\frac{\prod_{i=1}^nx_i}{x_rx_s}\:\ r<s,\ s-r \text{ odd}\right\}.$$
\item[(b)]
If $n$ is odd, then
$$\displaystyle G(I^{[\frac{n-1}{2}-1]})=\left\{\frac{\prod_{i=1}^nx_i}{x_rx_sx_t}\:\ r<s<t,\  \text{$s-r$ and  $t-s$ odd}\right\}.$$
\end{itemize}
\end{Lemma}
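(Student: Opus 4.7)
The plan is to identify each generator of $I^{[\nu(C_n)-1]}$ with a matching $M$ of $C_n$ of size $\nu(C_n)-1$, and then to encode $M$ by its set $U$ of \emph{uncovered} vertices. The corresponding squarefree generator of $I^{[\nu(C_n)-1]}$ is then
\[
\prod_{i \notin U} x_i \;=\; \frac{\prod_{i=1}^{n} x_i}{\prod_{j \in U} x_j},
\]
so it suffices to determine which subsets $U\subset [n]$ arise as the uncovered set of some matching of the stated size. A quick count shows $|U| = 2$ when $n$ is even and $|U| = 3$ when $n$ is odd, matching the shape of the claimed generators.

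The key structural observation I would use is that deleting the vertices in $U$ from $C_n$ leaves a disjoint union of paths, namely the open arcs of $C_n$ between cyclically consecutive elements of $U$. Thus a matching of $C_n$ with uncovered set exactly $U$ exists if and only if each such arc admits a perfect matching, which happens if and only if each arc has an even number of vertices (the standard fact that a path has a perfect matching precisely when it has even order).

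For part (a), with $U=\{r,s\}$ and $r<s$, the two arcs contain $s-r-1$ and $n-(s-r)-1$ vertices respectively. Since $n$ is even, both quantities are even iff $s-r$ is odd, which yields the description in (a). For part (b), with $U=\{r,s,t\}$ and $r<s<t$, the three arcs contain $s-r-1$, $t-s-1$ and $n-(t-r)-1$ vertices; requiring the first two to be even forces $s-r$ and $t-s$ odd, and then $t-r$ is even, so $n$ being odd makes the third arc length even automatically. Conversely the parity conditions in (b) guarantee all three arcs admit perfect matchings. I do not expect a real obstacle here; the proof is a straightforward parity bookkeeping on arcs of the cycle, with the only care needed being to track the cyclic arc through vertex $n$ and back to vertex $1$.
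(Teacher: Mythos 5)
Your proposal is correct and follows essentially the same route as the paper's proof: identify generators with matchings of size $\nu(C_n)-1$, pass to the set of uncovered vertices, and observe that each arc of the cycle between consecutive uncovered vertices must have even order. You are in fact slightly more careful than the paper, which only spells out the forward inclusion (deducing the parity of $s-r$, resp.\ $s-r$ and $t-s$, from the evenness of the components) and leaves the converse direction implicit, whereas you explicitly verify that the stated parity conditions force all arcs, including the one through vertex $n$, to admit perfect matchings.
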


\begin{proof}
(a) Since the generators of $G(I^{[n/2-1]})$ correspond to matchings of $C_n$ of size $n/2-1$ and since any such matching misses exactly two vertices, say $r$ and $s$ with $r<s$, it follows that each component of $C_n\setminus \{r, s\}$ has an even number of vertices. One of the components is $[r+1, s-1]$. Therefore $s-r$ is an odd number.

\medskip
(b) Since the generators of $G(I^{[(n-1)/2-1]})$ correspond to matchings of $C_n$ of size $(n-1)/2-1$ and since any such matching misses exactly three vertices, say $r$, $s$ and $t$ with $r<s<t$, it follows that each component of $C_n\setminus \{r, s, t\}$ has an even number of vertices. Two of the   components are $[r+1, s-1]$ and $[s+1, t-1]$. Therefore $s-r$ and $t-s$ are odd numbers.
\end{proof}

\begin{Lemma}
\label{odd cycle}
Let $C_n$ be a cycle of odd length $n>3$ and $I$ its edge ideal. Then  $$I^{[\frac{n-1}{2}-1]}=I_{\Delta},$$ where  $\Delta$ is the  simplicial complex with  facet set
$$\{ [n]\setminus \{r, s, t\}\: \ \ r<s<t,\ \text{$s-r$ or  $t-s$ even}\}.$$
\end{Lemma}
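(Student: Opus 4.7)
The plan is to identify the simplicial complex $\Delta'$ determined by the equation $I^{[(n-1)/2-1]}=I_{\Delta'}$ (which exists because $I^{[(n-1)/2-1]}$ is squarefree) and show that its facets are exactly those listed for $\Delta$. By Lemma~\ref{e-o cycle}(b), the minimal monomial generators of $I^{[(n-1)/2-1]}$ are precisely the $x_{[n]\setminus\{r,s,t\}}$ with $r<s<t$ and both $s-r$ and $t-s$ odd; equivalently, the minimal non-faces of $\Delta'$ are exactly the sets $[n]\setminus\{r,s,t\}$ for such triples.

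Since every minimal non-face has cardinality $n-3$, any subset of $[n]$ of size at most $n-4$ automatically lies in $\Delta'$, and a subset of size $n-3$ lies in $\Delta'$ if and only if it is not itself a minimal non-face. Therefore, the size-$(n-3)$ members of $\Delta'$ are precisely the sets $[n]\setminus\{r,s,t\}$ with $s-r$ or $t-s$ even, matching the claimed facet list. The remaining task is to rule out faces of $\Delta'$ of size $\geq n-2$. Since any such face contains a subset of size $n-2$, it suffices to prove: \emph{every $[n]\setminus\{a,b\}$ with $a<b$ contains some minimal non-face as a subset}. This forces the above size-$(n-3)$ sets to be facets and prevents any larger faces, finishing the proof.

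The main work is the combinatorial case analysis establishing the highlighted claim, i.e., producing for each pair $a<b$ a third element $c\notin\{a,b\}$ such that the sorted triple $\{a,b,c\}$ has both consecutive gaps odd. I plan to split on the parity of $b-a$ and the boundary case $a=1$. If $a>1$ and $b-a$ is odd, take $c=a-1$ (gaps $1$ and $b-a$). If $a>1$ and $b-a$ is even, take $c=a+1$ (gaps $1$ and $b-a-1$). If $a=1$ and $b$ is odd, take $c=2$ (gaps $1$ and $b-2$). The delicate case is $a=1$ with $b$ even; here one wants $c=b+1$, which requires $b<n$, and this is exactly where the hypothesis that $n$ is odd enters, since an even $b$ cannot equal $n$. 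Verifying that each proposed $c$ yields both gaps odd is then immediate, and this finishes the description of the facets of $\Delta'$.
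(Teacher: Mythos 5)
Your identification of the minimal non-faces via Lemma~\ref{e-o cycle}, your determination of the size-$(n-3)$ faces, and your case analysis producing for each pair $a<b$ a third vertex $c$ so that the sorted triple has both consecutive gaps odd are all correct; that last step is in fact a more direct route to ``no faces of cardinality $\geq n-2$'' than the paper's argument, which instead counts maximum matchings of the two paths obtained by deleting two vertices from $C_n$ and uses that their orders sum to the odd$+$even decomposition of $n-2$. However, there is a genuine gap at the end: knowing that the listed sets are facets and that there are no faces of cardinality $\geq n-2$ does not yet show that the facet set of $\Delta'$ is \emph{exactly} the listed collection. You must also exclude facets of cardinality $\leq n-4$. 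Every set of size $n-4$ is a face of $\Delta'$, as you note, but a priori some $[n]\setminus M$ with $|M|=4$ could fail to be contained in any of the listed size-$(n-3)$ faces --- this happens precisely when every $3$-subset of $M$ has both consecutive gaps odd --- and such a set would then be an extra facet, giving $\Delta'\neq\Delta$.

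The missing step is therefore: for every $M=\{t_1<t_2<t_3<t_4\}\subseteq [n]$ there is a $3$-subset $\{r<s<t\}$ of $M$ with $s-r$ or $t-s$ even. This is exactly what the paper isolates (in stronger form) as Lemma~\ref{evenodd} and invokes in the last paragraph of its proof. The fix is short: if both $\{t_1,t_2,t_3\}$ and $\{t_2,t_3,t_4\}$ had all consecutive gaps odd, then $t_4-t_2=(t_4-t_3)+(t_3-t_2)$ would be even, so $\{t_1,t_2,t_4\}$ has an even gap. With this observation added, every face of size $\leq n-4$ lies in one of the listed facets and your proof is complete.
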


\begin{proof}
For  $F\subseteq [n]$ we set $\xb_F=\prod_{i\in F}x_i$. Let  $\Delta$ be a simplicial complex with  the set of minimal nonfaces
$$\mathcal{N}(\Delta)=\{F\: \xb_F\in G(I^{[(n-3)/{2}]})\}.$$
Then $I_{\Delta}=I^{[(n-3)/2]}$, and hence $F\subset [n]$ with $|F|=n-3$ belongs to $\Delta$ if and only if $\xb_F\notin G(I^{[(n-3)/{2}]})$. By Lemma~\ref{e-o cycle} this is the case if and only if  $F=[n]\setminus \{r, s, t\}$ for some $r, s, t$ with $r<s<t$ and such  that $s-r$ or $t-s$ is even.

Next we claim that all sets $H\subset [n]$ with $|H|\geq n-2$ are non-faces of $\Delta$. To show this, it suffices to show that each $H\subset [n]$ with $|H|=n-2$ is a non-face of $\Delta$, i.e. \  $\xb_H\in (I^{[(n-3)/{2}]})$.  Let  $H=[n]\setminus \{r, s\}$ with $r<s$. Then $\xb_H\in (I^{[(n-3)/{2}]})$ if and only if there exists a matching of $C_n$ of size $(n-3)/{2}$ whose vertex set does not contain $r, s$.

Removing the vertices $r$ and $s$ from $C_n$ we obtain two paths  $L_1$ and $L_2$ with $|V(L_1)|=k_1$ and $|V(L_2)|=k_2$ and such that $k_1+k_2=n-2$, possibly with one of $k_1, k_2$ equal to zero.  Thus a matching of $C_n$ which avoids the vertices $r$ and $s$ is the same as a matching of $L_1$ and $L_2$. It follows that such a maximal size matching has size $\lfloor k_1/2\rfloor +\lfloor k_2/2\rfloor$. Since $n$ is odd and $k_1+k_2=n-2$, we conclude that one of $k_1, k_2$ is odd and the other one is even. So that in any case $\lfloor k_1/2\rfloor +\lfloor k_2/2\rfloor=(n-3)/2$, as desired.

It remains to be shown that there are no facets $F\in \Delta$ with $|F|\leq n-4$. This fact will follow once we have shown that for any subset $M\subset [n]$ with $|M|=4$ there exists $N=\{r, s, t\}\subset M$ with $r<s<t$ and such that $s-r$ or $t-s$ is even. But this immediately follows from the next lemma.
\end{proof}

\medskip
In order to simplify our discussion we introduce the set
$$\mathcal{S}=\{\{r, s, t\}\:\ \text{$r<s<t$, $s-r$ or $t-s$ even} \}.$$

For this set there are 6 different patterns possible as indicated in the following list:
\begin{center}
(i) $e e e$,\;\; \;\;(ii) $e e o$,\;\; \;\;(iii) $o e e$,\;\;  \;\;(iv) $o o o$,\;\; \;\;(v) $o o e$,\;\; \;\;(vi) $e o o$.
\end{center}
Here $e$ stands for even and $o$ for odd.  For example, (iii) describes  the case, where  $r$ is odd, $s$ is even and $t$ is even.

\medskip
The following observation will be useful in the proof of Proposition~\ref{beta odd}.

\begin{Lemma}
\label{evenodd}
For any  $M=\{t_1,t_2,t_3,t_4\}$ with $1\leq t_1<t_2<t_3<t_4\leq n$. We set $M_i=M\setminus\{t_i\}$.  Let
\[
\mathcal{S}(M)=\{i\:\; M_i\in\mathcal{S}\}.
\]
Then  $\mathcal{S}(M)$ has  $2$ or $4$ elements. More precisely, if $|\mathcal{S}(M)|=2$, then either $\mathcal{S}(M)=\{i, i+1\}$ for some $1\leq i\leq 3$ or $\mathcal{S}(M)=\{1, 4\}$.
\end{Lemma}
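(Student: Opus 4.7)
The plan is to translate membership in $\mathcal{S}$ into a purely parity-theoretic condition and then carry out a short finite case analysis. The starting observation is that, for $r<s<t$, the triple $\{r,s,t\}$ \emph{fails} to lie in $\mathcal{S}$ iff both $s-r$ and $t-s$ are odd, equivalently iff the three values $r,s,t$ strictly alternate in parity. Hence for each $i\in\{1,2,3,4\}$ one has $M_i\notin \mathcal{S}$ precisely when the three elements of $M\setminus\{t_i\}$, listed in natural order, have strictly alternating parities.

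I would then encode all the relevant parity data of $M=\{t_1<t_2<t_3<t_4\}$ in the three bits $a,b,c\in\{0,1\}$ defined by $a\equiv t_2-t_1$, $b\equiv t_3-t_2$, $c\equiv t_4-t_3\pmod 2$. Writing $\delta_i=1$ when $M_i\notin \mathcal{S}$ and $\delta_i=0$ otherwise, inspection of which elements of $M_i$ become neighbours after deleting $t_i$ gives
\[
\delta_1=bc,\qquad \delta_4=ab,\qquad \delta_2=((a+b)\bmod 2)\cdot c,\qquad \delta_3=a\cdot((b+c)\bmod 2);
\]
for instance, the alternation condition for $M_2=\{t_1,t_3,t_4\}$ is $t_1\not\equiv t_3$ (equivalent to $a+b$ odd) together with $t_3\not\equiv t_4$ (equivalent to $c=1$).

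The argument finishes by reading off $\mathcal{S}(M)=\{i:\delta_i=0\}$ from the $2^3=8$ choices of $(a,b,c)$. A direct tabulation shows $\mathcal{S}(M)=\{1,2,3,4\}$ in the four cases in which at most one of $a,b,c$ equals $1$, namely $(0,0,0),(1,0,0),(0,1,0),(0,0,1)$; while the remaining four cases yield $\mathcal{S}(M)=\{3,4\},\{1,2\},\{2,3\},\{1,4\}$ for $(a,b,c)=(0,1,1),(1,1,0),(1,1,1),(1,0,1)$ respectively. In every case $|\mathcal{S}(M)|\in\{2,4\}$, and the possible size-$2$ outcomes are exactly the sets $\{i,i+1\}$ for $i\in\{1,2,3\}$ together with $\{1,4\}$, as claimed. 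There is no genuine obstacle here; the one piece of finesse worth flagging is to enumerate by the three parity \emph{differences} $a,b,c$ rather than by the four individual parities of $t_1,\ldots,t_4$, which halves the case count and makes the ``cyclic'' size-$2$ family $\{1,4\}$ appear naturally alongside the three adjacent pairs $\{i,i+1\}$.
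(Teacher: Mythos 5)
Your proof is correct and follows essentially the same route as the paper's: both reduce membership in $\mathcal{S}$ to a parity condition (a triple fails to lie in $\mathcal{S}$ exactly when its three entries strictly alternate in parity) and settle the claim by a finite case check, the only difference being that you index the cases by the three consecutive parity differences $a,b,c$ (8 cases, with explicit Boolean formulas for the $\delta_i$), whereas the paper tabulates all 16 even--odd patterns of $t_1,\dots,t_4$ directly. Your tabulated outcomes agree with the paper's list in every case.
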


\begin{proof}
The set $\mathcal{S}(M)$ consists of $4$ elements, if the even-odd pattern on $M$ is one of the following $e e e e$, $e e e o$, $o e e e$, $e e o o$, $o o e e $, $e o o o$, $o o o e$, $o o o o$.

Otherwise we have
\begin{eqnarray*}
\mathcal{S}(e o e e)=\{1, 2\},\hspace{.3cm}\mathcal{S}(e e o e)=\{3, 4\},
\hspace{.3cm}\mathcal{S}(o e o e)=\{2, 3\},\hspace{.3cm}\mathcal{S}(o e e o)=\{1, 4\},\\
\mathcal{S}(e o e o)=\{2, 3\},\hspace{.3cm}\mathcal{S}(e o o e)=\{1, 4\},
\hspace{.3cm}\mathcal{S}(o e o o)=\{1, 2\},\hspace{.3cm}\mathcal{S}(o o e o)=\{3, 4\}.
\end{eqnarray*}
The assertion of the lemma follows from this list.
\end{proof}

\begin{Proposition}
\label{beta odd}
Let $C_n$ be a cycle of odd length $n>3$ and $I$ its edge ideal. Then  $$\beta_{2,n}(I^{[\frac{n-3}{2}]})\neq 0.$$
\end{Proposition}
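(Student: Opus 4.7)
My plan is to combine Lemma~\ref{odd cycle} with Hochster's formula and then exhibit an explicit top-dimensional cycle. By Lemma~\ref{odd cycle} we have $I^{[(n-3)/2]} = I_\Delta$, where $\Delta$ has facets $F_M := [n]\setminus M$ for $M \in \mathcal{S}$, so $\dim \Delta = n-4$. Since $W=[n]$ is the unique subset of $[n]$ of cardinality $n$, Hochster's formula gives
\[
\beta_{2,n}(I^{[(n-3)/2]}) = \dim_K \widetilde H_{n-4}(\Delta;K).
\]
As $\Delta$ carries no faces of dimension exceeding $n-4$, this top homology equals $\ker \partial_{n-4}$, so it suffices to produce a single nonzero top-dimensional cycle.

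The chain I would try is
\[
z := \sum_{M \in \mathcal{S}} (-1)^{\sigma(M)}\, F_M, \qquad \sigma(M) := \sum_{t \in M} t.
\]
The sign $(-1)^{\sigma(M)}$ is chosen so that the simplicial boundary signs combine nicely. For any $4$-subset $N = \{t_1 < t_2 < t_3 < t_4\}$ of $[n]$, one checks that the position of $t_i$ inside the ordered facet $F_{N\setminus\{t_i\}}$ equals $t_i - i$, so the incidence sign of $F_N$ in $\partial F_{N\setminus\{t_i\}}$ is $(-1)^{t_i - i}$. Together with the weight $(-1)^{\sigma(N\setminus\{t_i\})} = (-1)^{\sigma(N)-t_i}$, the coefficient of $F_N$ in $\partial z$ simplifies to
\[
(-1)^{\sigma(N)} \sum_{i \in \mathcal{S}(N)} (-1)^i.
\]

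The main obstacle---and the reason Lemma~\ref{evenodd} was proved---is showing that this alternating sum vanishes for every $N$. By Lemma~\ref{evenodd}, $\mathcal{S}(N)$ is either $\{1,2,3,4\}$, one of $\{i,i+1\}$ for $i \in \{1,2,3\}$, or $\{1,4\}$; a direct inspection of these five cases gives $\sum_{i\in\mathcal{S}(N)}(-1)^i = 0$ in each of them. Hence $\partial z = 0$, and since the coefficients of $z$ are all $\pm 1$ and $\mathcal{S} \neq \emptyset$, the chain $z$ represents a nontrivial class in $\widetilde H_{n-4}(\Delta;K)$. This yields $\beta_{2,n}(I^{[(n-3)/2]}) \neq 0$.
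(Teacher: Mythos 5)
Your proof is correct and is essentially the paper's own argument: the same top-dimensional chain (your $(-1)^{\sigma(M)}$ differs from the paper's $(-1)^{\sigma(F_M)}$ only by the global sign $(-1)^{n(n+1)/2}$), the same reduction via Hochster's formula and $\dim\Delta=n-4$ to exhibiting a nonzero element of $\ker\partial_{n-4}$, and the same appeal to Lemma~\ref{evenodd} to control $\mathcal{S}(N)$. Your sign bookkeeping is in fact cleaner than the paper's: by observing that $t_i$ sits in position $t_i-i$ of $F_{N\setminus\{t_i\}}$, you reduce the coefficient of $F_N$ in $\partial z$ to $(-1)^{\sigma(N)}\sum_{i\in\mathcal{S}(N)}(-1)^i$, which visibly vanishes for all three shapes of $\mathcal{S}(N)$ at once, whereas the paper verifies the pairwise cancellations through a case analysis on the parities of the $t_i$.
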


\begin{proof}
By Lemma \ref{odd cycle}, $I^{[\frac{n-3}{2}]}=I_{\Delta}$ with
$$\mathcal{F}(\Delta)=\{[n]\setminus \{r, s, t\}\: \ \{r, s, t\}\  \in \mathcal{S} \}.$$
So, by  using Hochster's formula, it is enough to show that $ \widetilde{H}_{n-4}(\Delta; K)\neq 0$.

Let $\partial_j$ be $j$-th chain map in the augmented oriented chain complex $\widetilde{\mathcal{C}}=\widetilde{\mathcal{C}}(\Delta)$ of $\Delta$. The elements $b_F=[i_0, i_1, \ldots, i_{j}]$ with $F=\{i_0, i_1, \ldots, i_{j}\}\in \Delta$ and $i_0<i_1<\cdots<i_{j}$ form a $K$-basis of $\widetilde{\mathcal{C}}_{j}$.  By $({b_F})_{t}$ we denote the basis element $[i_0, i_1, \ldots, i_{t-1}, i_{t+1}, \ldots, i_{j}]$.

We have $\widetilde{H}_{n-4}(\Delta; K)= \Ker \partial_{n-4}/\Im \partial_{n-3}$. Since $\dim \Delta=n-4$, this implies that $\Im \partial_{n-3}=0$.   Set $\sigma(F)=\sum_{t=0}^{j}i_t$. We  let
$$\tau=\sum _{F\in {\mathcal{F}(\Delta)}}(-1)^{\sigma(F)}b_F,$$
and claim that $\tau\in\Ker \partial_{n-4}$. The claim will imply that
$$ \widetilde{H}_{n-4}(\Delta; K)= \Ker \partial_{n-4}\neq 0.$$
 We have
\begin{eqnarray}
\label{partial}
\partial_{n-4}(\tau)&=&\sum_{b_F\in \widetilde{\mathcal{C}}_{n-4}}(-1)^{\sigma(F)}(\sum_{j=0}^{n-4}(-1)^{j}{(b_F)}_j)\\
\nonumber
&=& \sum_{b_G\in \widetilde{\mathcal{C}}_{n-5}}(\sum_{j=0}^{n-4}\sum_{b_F\in \widetilde{\mathcal{C}}_{n-4}\atop (b_F)_j=b_G}(-1)^ {\sigma(F)+j})b_G.
\end{eqnarray}
We will show that for any $b_G\in \widetilde{\mathcal{C}}_{n-5}$, the coefficient $$\alpha_G=\sum_{j=0}^{n-4}\sum_{b_F\in \widetilde{\mathcal{C}}_{n-4}\atop (b_F)_j=b_G}(-1)^ {\sigma(F)+j}$$ of $b_G$ in (\ref{partial}) is zero. This then will imply that $\partial_{n-4}(\tau)=0$, as desired.

Let $G=[n]\setminus M$, where $M=\{t_1, t_2, t_3, t_4\}$ with  $t_1< t_2< t_3< t_4$. We set $G^{(i)} =G\union \{t_i\}$. Let $n_i$ be the position of $t_i$ in $b_{G^{(i)}}$. Thus $({b_{G^{(i)}}})_{n_i}=b_G$ for all $1\leq i\leq 4$.  In order to determine the integers $i$, $1\leq i\leq 4$, with $G^{(i)}\in \Delta$, it is enough to consider $\mathcal{S}(M)$. By Lemma \ref{evenodd}, $\mathcal{S}(M)$  is either $\{1,2,3, 4\}$ or $\{i, i+1\}$ for some $1\leq i\leq 3$ or $\{1, 4\}$.

\medskip
In the following we compute $\alpha_G$  depending on  the set  $\mathcal{S}(M)$.

\medskip
Suppose first that  $\mathcal{S}(M)=\{1,2,3, 4\}$. Then $\alpha_G=\sum_{i=1}^4(-1)^ {\sigma(G^{(i)})+n_i}=0$, because  $(-1)^{\sigma(G^{(i)})+n_i}=-(-1)^{\sigma(G^{(i+1)})+n_{i+1}}$ for any $1\leq i\leq 3$.

Indeed, since all the integers between $t_i$ and $t_{i+1}$ belong to  $G^{(i)}$ as well as to $G^{(i+1)}$, it follows that $n_{i+1}=n_i+r$, where $r=t_{i+1}-t_i-1$. Assume first that  $t_i$  and $t_{i+1}$ both are  even or both are odd. Then  $r$ is odd and
\begin{eqnarray*}
\label{sigma1}
 (-1)^{\sigma(G^{(i)})+n_i}&=&(-1)^{(\sigma(G)+t_i)+n_i}\\
 \nonumber
 &=&(-1)^{\sigma(G)+n_i}(-1)^{t_i}\\
 \nonumber
 &=&(-1)^{\sigma(G)+(n_{i+1}-r)}(-1)^{t_{i+1}}\\
  \nonumber
 &=&(-1)^{(\sigma(G)+t_{i+1})+(n_{i+1}-r)}\\
 \nonumber
 &=&(-1)^{\sigma(G^{(i+1)})+n_{i+1}}(-1)^r=-(-1)^{\sigma(G^{(i+1)})+n_{i+1}},
 \end{eqnarray*}
for all $1\leq i\leq 3$.

 Next assume that one of $t_i, t_{i+1}$ is odd and the other one is even. Then  $r$ is even and
  \begin{eqnarray*}
  \label{sigma2}
 (-1)^{\sigma(G^{(i)})+n_i}&=&(-1)^{(\sigma(G)+t_i)+n_i}\\
 \nonumber
 &=&(-1)^{\sigma(G)+n_i}(-1)^{t_i}\\
 \nonumber
 &=&(-1)^{\sigma(G)+(n_{i+1}-r)}(-1)^{t_{i+1}+1}\\
  \nonumber
 &=&(-1)^{(\sigma(G)+t_{i+1})+(n_{i+1}-r)+1}\\
 \nonumber
 &=&(-1)^{\sigma(G^{(i+1)})+n_{i+1}}(-1)^{r+1}=-(-1)^{\sigma(G^{(i+1)})+n_{i+1}},
 \end{eqnarray*}
 for $1\leq i\leq 3$.

\medskip
Now we assume that $\mathcal{S}(M)=\{i, i+1\}$ for some $1\leq i\leq 3$. Since $$(-1)^{\sigma(G^{(i)})+n_i}=-(-1)^{\sigma(G^{(i+1)})+n_{i+1}}$$ for $1\leq i\leq 3$ as we have seen before, we have $$\alpha_G=(-1)^{\sigma(G^{(i)})+n_i}-(-1)^{\sigma(G^{(i+1)})+n_{i+1}}=0.$$

\medskip
Finally assume that $\mathcal{S}(M)= \{1, 4\}$. Since  $t_2$ and $t_3$ are the only integers between $t_1$ and $t_4$ which do not belong to  $G^{(1)}$ as well as to $G^{(4)}$, we have  $n_4=n_1+r-2$, where $r=t_4-t_1-1$. Moreover, the proof of Lemma~\ref{evenodd} shows that in the case that $\mathcal{S}(M)= \{1, 4\}$, the integers $t_1$ and $t_4$ are both even or both odd. In particular, $r$ is odd.
Consequently
\begin{eqnarray*}
\label{sigma3}
 (-1)^{\sigma(G^{(1)})+n_1}&=&(-1)^{(\sigma(G)+t_1)+n_1}\\
 \nonumber
 &=&(-1)^{\sigma(G)+n_1}(-1)^{t_1}\\
 \nonumber
 &=&(-1)^{\sigma(G)+(n_{4}-r+2)}(-1)^{t_{4}}\\
  \nonumber
 &=&(-1)^{(\sigma(G)+t_{4})+(n_{4}-r)+2}\\
 \nonumber
 &=&(-1)^{\sigma(G^{(4)})+n_{4}}(-1)^{r+2}=-(-1)^{\sigma(G^{(4)})+n_{4}}.
 \end{eqnarray*}
 Therefore
$\alpha_G=(-1)^{\sigma(G^{(1)})+n_1}-(-1)^{\sigma(G^{(4)})+n_{4}}=0$.
Hence $\alpha_G$ is zero in any case and this completes the proof.
\end{proof}

Now we are ready to prove the Theorem \ref{cycle}.

\begin{proof}[Proof of  Theorem~\ref{cycle}]
Let us first discuss the case $n=4, 5$.
 Since there is no restricted matching for cycles of length $4$ and $5$, we have $\nu_0(C_n)=1$. Moreover,      $\nu(C_4)=2$ and $\nu(C_5)=2$. Furthermore,  $I^{[k]}$ has linear quotients for $k\geq \nu_0(C_n)$ for $n= 4$. If $n=5$, then  clearly  $\index(I)=2$.
Therefore in these cases all statements of the theorem hold.

Suppose now that $n>5$.
Without loss of generality we can assume that   $V(C_n)=[n]$ and $E(C_n)=\{\{i, i+1\}\: \ 1\leq i\leq n-1\}\cup \{\{1, n\}\}$.

(a) In the case  $n$ is even the set $T=\{\{1, 2\}, \{3, 4\}, \ldots,\{n-1, n\}\}$ is a matching of maximal size. So  $\nu=|T|=n/2$. In the case that $n$ is odd the set $T'=\{\{1, 2\}, \{3, 4\}, \ldots,\{n-2, n-1\}\}$ is a  matching of maximal size and so $\nu=|T|=(n-1)/2$. Thus in general $\nu=\lfloor n/2\rfloor$.

\medskip
(b) In the case  that $n$ is even the set $T=\{\{1, 2\}, \{4, 5\}, \{6, 7\}, \ldots,\{n-2, n-1\}\}$ is a  matching of maximal size such that $\{1, 2\}$ forms a gap with any other edge in this matching and so $\nu_0(C_n)=|T|=(n-2)/2$. Also in the case that $n$ is odd the set $T'=\{\{1, 2\}, \{4, 5\}, \{6, 7\}, \ldots,\{n-3, n-2\}\}$ is a  matching of maximal size such that $\{1, 2\}$ forms a gap with any other edge in this matching and  so $\nu_0(C_n)=|T'|=(n-3)/2$. Thus   in both cases $\nu_0(C_n)=\nu(C_n)-1$, using part (a).

\medskip
(c) Let $n$ be even. By using Theorem \ref{lin.quo} and part (b), it is enough to show that  $I^{[\nu(C_n)-1]}$ has linear quotients.

Let $u_1>u_2>\cdots>u_r$ be the monomial  generators of $I^{[\nu(C_n)-1]}$ ordered  lexicographically.  We will show that the colon ideal $(u_1, u_2, \ldots, u_{i-1})\:u_i$ is generated by linear forms for any $2\leq i\leq r$. Let $J_i=(u_1, u_2, \ldots, u_{i-1})$. As we mentioned in the proof of Theorem ~\ref{lin.quo}, $\{u_j/ \gcd(u_j, u_i) \: 1\leq j\leq i-1\}$ is a set of generators of $J_i\: u_i$.
 By Lemma~\ref{e-o cycle}, for all $1\leq j\leq r$ we have $u_j=(\prod_{k=1}^nx_k)/(x_{l_j}x_{l'_j})$  for some $l_j< l'_j\leq n$ with $l'_j-l_j$ odd.

 Let $t<i$ and $f_t=u_t/ \gcd(u_t, u_i)$,
and suppose that  two of the integers $l_t, l'_t, l_i, l'_i$   are equal. Then, since $u_t>u_i$,  $f_t=x_{l_i}$ if $l_t\neq l_i$, and $f_t=x_{l'_i}$ if $l_t=l_i$.

Next suppose that no two of the integers $l_t, l'_t, l_i, l'_i$ are equal. Then the integers $l_t, l'_t, l_i, l'_i$ are pairwise different. Thus $f_t=x_{l_i}x_{l'_i}$. If $l'_i\leq n-2$, then let $u_j=(\prod_{j=1}^nx_j)/(x_{l_i}x_{l'_i+2})$. Since $l'_i-l_i$ is odd,  it follows that $u_j\in G(I^{[\nu(C_n)-1]})$. Also $u_j>u_i$ and $f_j=x_{l'_i}\in G(J_i:u_i)$. Therefore $f_j$ divides $f_t$.

Suppose that  $l'_i\geq n-1$. First let  $l'_i=n-1$. Let $u_j=(\prod_{j=1}^nx_j)/(x_{l'_i}x_{n})$. Since $l_i< n-1$, it follows that $u_j>u_i$, and hence $u_j\in G(J_i)$ and $f_j=x_{l_i}\in G(J_i:u_i)$. Thus $f_j$ divides $f_t$.

In the case  that $l'_i=n$, since $u_i$ is not the greatest monomial among  monomial generators of $I^{\nu(C_n)-1}$ we have $l_i\leq n-2$, and since $l'_i-l_i$ is odd, it follows that $l_i\leq n-3$. Let $u_j=(\prod_{j=1}^nx_j)/(x_{l_i+2}x_{l'_i})$. So $u_j>u_i$, $u_j\in G(J_i)$, $f_j=x_{l_i}\in G(J_i:u_i)$ and $f_j$ divides $f_t$.

The above discussion of the various cases shows that $J_i:u_i$ is generated by variables, and so $I^{[\nu(C_n)-1]}$ has linear quotients.

\medskip
Now let $n$ be odd.
We will prove that   $\index(I^{[\nu_0(C_n)]})=2$.  By Proposition~\ref{beta odd},   $\beta_{2, n}(I^{[\nu_0(C_n)]})\neq 0$, and since by part (b) of this theorem,  $I^{[\nu_0(C_n)]}$ is generated in degree $n-3$, it follows that the minimal free resolution of $I^{[\nu_0(C_n)]}$ is not linear at $i=2$ and so $\index(I^{[\nu_0(C_n)]})\leq 2$. Therefore it is enough to show that $\index(I^{[\nu_0(C_n)]})>1$. By using Corollary~\ref{connected}  it is sufficient to prove that for any $u, v \in G(I^{[\nu_0(C_n)]})$  there exists a path in the graph $G^{(u, v)}_{I^{[\nu_0(C_n)]}}$  connecting $u$ and $v$. Clearly,    if  $u, v \in G(I^{[\nu_0(C_n)]})$ with $\deg(\lcm(u, v))\leq (n-3)+1$, then $u$ and $v$ are connected in $G^{(u, v)}_{I^{[\nu_0(C_n)]}}$. Suppose  that $u, v \in G(I^{[\nu_0(C_n)]})$ with $\deg(\lcm(u, v))> (n-3)+1$. By  Lemma~\ref{e-o cycle} we have
$u={(\prod_{i=1}^nx_i)}/{(x_rx_sx_t)}$ and $v={(\prod_{i=1}^nx_i)}/{(x_{r'}x_{s'}x_{t'})}$
where $r<s<t$, $r'<s'<t'$ with $s-r$, $t-s$, $s'-r'$ and $t'-s'$ odd.

\medskip
First suppose that $r=r'$. If $s$ or $t$ belongs to $\{s' , t'\}$,  then   $\deg(\lcm(u, v))=(n-3)+1$, a contradiction. Therefore all the integers $s, t, s', t'$ are pairwise distinct. Without loss of generality we may assume that $s<s'$.  Set $w={(\prod_{i=1}^nx_i)}/{(x_rx_sx_{t'})}$. Then since $s-r$ and $s'-r$ are odd, both $s$ and $s'$ are either even or odd. Since $t'-s'$ is odd, it follows that $t'-s$ is also odd. Thus $w\in G(I^{[\nu_0(C_n)]})$. Moreover $w$ divides $\lcm(u,v)$ and $\deg(\lcm(u,w))=(n-3)+1=\deg(\lcm(v,w))$. Therefore $\{u, w\}, \{w, v\}\in E(G^{(u, v)}_{I^{[\nu_0(C_n)]}})$ and so $u$ and $v$ are connected.

\medskip
For the rest of our discussion we suppose that $r\neq r'$.  We may assume that $r<r'$.

\medskip
First consider the case   $s'=t$. If $t$ is odd (resp. even), then since $t-s$, $s-r$ and $s'-r'$ are odd we conclude that $r$ is odd (resp. even) and $r'$ is even (resp. odd). Let $w={(\prod_{i=1}^nx_i)}/{(x_rx_{r'}x_{t})}$. It is seen that $w\in G(I^{[\nu_0(C_n)]})$, $w$ divides $\lcm(u,v)$ and $\deg(\lcm(u,w))=(n-3)+1=\deg(\lcm(v,w))$. Therefore $\{u, w\}, \{w, v\}\in E(G^{(u, v)}_{I^{[\nu_0(C_n)]}})$. This implies that $u$ and $v$ are connected.

\medskip
 Now consider the case that $s'\neq t$.
  Suppose first that both $r$ and $r'$ are odd (resp. even). Then both $s, s'$ are even (resp. odd),  and  both $t, t'$ are odd (resp. even). If $s'<t$, then let $w={(\prod_{i=1}^nx_i)}/{(x_rx_{s'}x_{t})}$ and $w'={(\prod_{i=1}^nx_i)}/{(x_{r'}x_{s'}x_{t})}$. If $s'>t$, then let $w={(\prod_{i=1}^nx_i)}/{(x_sx_{t}x_{s'})}$ and $w'={(\prod_{i=1}^nx_i)}/{(x_{t}x_{s'}x_{t'})}$. In both cases $w, w'\in G(I^{[\nu_0(C_n)]})$,  they  divide $\lcm(u,v)$, and also $\deg(\lcm(u,w))=\deg(\lcm(w,w'))=\deg(\lcm(w',v))=(n-3)+1$. Therefore $\{u, w\}, \{w, w'\}, \{w', v\}\in E(G^{(u, v)}_{I^{[\nu_0(C_n)]}})$ and so $u$ and $v$ are connected.
Finally suppose that one of the integers $r, r'$ is odd and the other one is even. We may assume that $r$ is odd. Then both $s', t$ are odd, and both $s, t'$ are even. If $s'<t$, then let $w={(\prod_{i=1}^nx_i)}/{(x_rx_{r'}x_{s'})}$ and $w'={(\prod_{i=1}^nx_i)}/{(x_{r}x_{r'}x_{t})}$. If $s'>t$, then let $w={(\prod_{i=1}^nx_i)}/{(x_rx_{s}x_{s'})}$ and $w'={(\prod_{i=1}^nx_i)}/{(x_{r}x_{r'}x_{s'})}$.
Thus in both cases $w, w'\in G(I^{[\nu_0(C_n)]})$,  they  divide $\lcm(u,v)$, and also $\deg(\lcm(u,w))=\deg(\lcm(w,w'))=\deg(\lcm(w',v))=(n-3)+1$. Therefore $\{u, w\}, \{w, w'\}, \{w', v\}\in E(G^{(u, v)}_{I^{[\nu_0(C_n)]}})$. Hence $u$ and $v$ are connected.

\medskip
The above argument shows that in any case $u$ and $v$ are connected in $G^{(u, v)}_{I^{[\nu_0(C_n)]}}$, as desired.
\end{proof}


\end{document}